\theoremstyle{plain}
\newtheorem{lem}{\textsc{Lemma}}
\newtheorem{thm}{\textsc{Theorem}}
\newtheorem{prop}{\textsc{Proposition}}
\newtheorem{cor}{\textsc{Corollary}}
\newtheorem{defn}[equation]{\textsc{Definition}}
\theoremstyle{remark}
\newtheorem{rem}{{\bf \textsc{Remark}}}
\newtheorem{ex}{{\bf \textsc{Example}}}
\newcommand{\R}{\mathbb{R}}
\newcommand{\N}{\mathbb{N}}
\newcommand{\Rn}{\mathbb{R}^n}
\newcommand{\Om}{\Omega}
\def\px{p(x)}
\def\p{{p(\cdot)}}
\def\pstarx{p^{*}(x)}
\def\pstar{p^{*}(\cdot)}
\def\pstarm{p^{*}_{-}}
\def\pstarp{p^{*}_{+}}
\def\rx{r(x)}
\def\rdot{{r(\cdot)}}
\def\sigmadot{{\sigma(\cdot)}}
\def\tsigmax{\tilde{\sigma}(x)}
\def\tdeltax{\tilde{\delta}(x)}
\def\ttaux{\tilde{\tau}(x)}
\newcommand{\Wpx}{W^{1,p(\cdot)}}
\newcommand{\Wpxzero}{W^{1,p(\cdot)}_{0}}
\newcommand{\Wpxloc}{W^{1,p(\cdot)}_{loc}}
\def\osc{\operatornamewithlimits{osc}}
\def\essinf{\operatornamewithlimits{ess\,inf}}
\def\supp{\operatornamewithlimits{supp}}
\DeclareMathOperator*{\esssup}{ess\,sup}
\def\Xint#1{\mathchoice
   {\XXint\displaystyle\textstyle{#1}}%
   {\XXint\textstyle\scriptstyle{#1}}%
   {\XXint\scriptstyle\scriptscriptstyle{#1}}%
   {\XXint\scriptscriptstyle\scriptscriptstyle{#1}}%
   \!\int}
\def\XXint#1#2#3{{\setbox0=\hbox{$#1{#2#3}{\int}$}
   \vcenter{\hbox{$#2#3$}}\kern-.5\wd0}}
\def\vint{\Xint-}
\DeclareMathOperator{\divop}{div}
\renewcommand{\div}{\divop}
\renewcommand{\phi}{\varphi}
\newcommand{\calF}{{\mathcal F}}
\newcommand{\calFone}{{\mathcal {F}_1}}
\newcommand{\calFtwo}{{\mathcal {F}_2}}
\date{\today}
\newcommand{\Chiado}{Chiad\`o\ }
\newcommand{\kom}[1]{}
\renewcommand{\kom}[1]{{\bf [#1]}}
\definecolor{blau}{rgb}{0.1,0.0,0.9}
\newcounter{komcounter}
\numberwithin{komcounter}{section}
\title{\bf H\"older continuity of quasiminimizers with nonstandard growth}
\author{
Tomasz Adamowicz{\small{$^1$}}
\\
\it\small Institute of Mathematics of the Polish Academy of Sciences \\
\it\small Sniadeckich 8, 00-656 Warsaw, Poland\/{\rm ;}
\it\small T.Adamowicz@impan.pl
\\
\\
Olli Toivanen{\small{$^2$}}
\\
\it\small Institute of Mathematics of the Polish Academy of Sciences \\
\it\small Sniadeckich 8, 00-656 Warsaw, Poland\/{\rm ;}
\it\small O.Toivanen@impan.pl
}
\date{}
\begin{document}

\maketitle

\footnotetext[1]{T. Adamowicz was supported by a grant of National Science Center, Poland (NCN),
UMO-2013/09/D/ST1/03681.}
\footnotetext[2]{O. Toivanen was supported by a grant from the Warsaw Center of Mathematics and Computer Science.}

\begin{abstract}
   We show the H\"older continuity of quasiminimizers of the energy functionals $\int f(x,u,\nabla u)\,dx$ with nonstandard growth under the general structure conditions
\[
|z|^{p(x)} - b(x)|y|^{r(x)}-g(x) \leq f(x,y,z) \leq \mu|z|^{p(x)} + b(x)|y|^{r(x)} + g(x).
\]
 The result is illustrated by showing that weak solutions to a class of $(A,B)$-harmonic equations
 \[
  -\div A(x,u,\nabla u) = B(x,u,\nabla u),
 \]
  are quasiminimizers of the variational integral of the above type and, thus, are H\"older continuous. Our results extend work by Chiad\`o Piat--Coscia~\cite{Chiado-Coscia}, Fan--Zhao~\cite{Fan-Z} and Giusti--Giaquinta~\cite{Gia-Giu}.
\end{abstract}
\bigskip

\noindent {\bf Keywords:} Calculus of variations, de Giorgi estimate, H\"older continuity, nonstandard growth, $p$-Laplace, $\p$-Laplacian, quasiminima, variable exponent, variational integral.
\medskip

\noindent
\emph{Mathematics Subject Classification (2010):} Primary 49N60; Secondary 35J20, 35J62.

\section{Introduction}

 The regularity theory for minima and quasiminima of energy functionals with nonstandard growth and related PDEs have been a subject of intensive studies in the last two decades. The importance of such studies grows largely from the applications, for instance in electro-rheological fluids, Acerbi--Mingione~\cite{am02}, in fluid dynamics, Diening--R\r u\v zi\v cka~\cite{dr}, in the study of image processing, Chen--Levine--Rao~\cite{clr} and in the model of thermistor, Zhikov~\cite{z97}; see Harjulehto--H\"ast\"o--L\^e--Nuortio~\cite{hhn} for a recent survey and further references, see also the monograph by R\r u\v zi\v cka~\cite{Ru00} and R\v adulescu~\cite{Rad}. The simplest energy functional studied in the variable exponent setting is the one associated with the $\p$-harmonic operator:
 \[
  \int_{\Om} |\nabla u(x)|^{\px}dx
 \]
for a measurable function $p:[1,\infty)\to \R$, domain $\Om\subset \R^n$ and a function $u\in \Wpxloc(\Om)$. If $p=const$, then this energy reduces to the classical $p$-harmonic energy with the related $p$-harmonic equation arising as the Euler-Lagrange equation for the $p$-Dirichlet energy. In spite of the symbolic similarity to the constant exponent case, unexpected phenomena occur already in the case $n=1$, as it turns out that the minimum of the $\p$-Dirichlet energy may fail to exist even for smooth functions $p$, see Examples 3.2 and 3.6 in \cite{hhn}.

 The main purpose of this paper is to study the energy functionals
\[
\mathcal{F}_{\Om}(u)=\int_\Omega f(x,u,\nabla u)\,dx
\]
for $f:\Om\times \R\times \R^n\to\R$, under the general growth conditions
\[
|z|^{p(x)} - b(x)|y|^{r(x)}-g(x) \leq f(x,y,z) \leq \mu|z|^{p(x)} + b(x)|y|^{r(x)} + g(x),
\]
 where $b,g$ are nonnegative functions in the appropriate variable exponent Lebesgue spaces, see \eqref{F-growth} and \eqref{F-growth2} for details and Preliminaries for further introduction to the topic. Under such growth conditions, Toivanen~\cite{Toivanen-2} showed the local boundedness of minimizers. Here we continue and extend further investigations to include the case of quasiminimizers, see Definition~\ref{quasi-def}, and to show their H\"older continuity, see Theorem~\ref{new-Theorem-4-1}. The novelty of our results lies largely in the fact that we allow coefficients $b$ and $g$ to be integrable functions. Especially the fact that $b=b(x)$ requires extra attention comparing to results for $b=const$ or $b\in L^{\infty}$. In a consequence one needs to extend some of the de Giorgi estimates to our general setting, see Lemma~\ref{Caccioppoli} for the Caccioppoli-type estimates for quasiminimizers of $\mathcal{F}_{\Om}$. Moreover, we provide fine $L^\infty$ estimates involving level sets (Theorem~\ref{new-prop-4.2}).

For the history of the problem, we note that the H\"older continuity of quasiminimizers with nonstandard growth has been proved by Chiad\`o Piat--Coscia~\cite[Theorem 4.1]{Chiado-Coscia} and Fan-Zhao~\cite[Theorem 3.1]{FZ} under the assumption that $b=g=const$, cf. (1.3) in \cite{Chiado-Coscia} and (3.1) in \cite{FZ}. These results were subsequently generalized to the case $b=const$, $g\in L^{s(\cdot)}$ for $s(\cdot)>n/p(\cdot)$ in Fan-Zhao~\cite[Theorem 3.2]{Fan-Z} and applied in the studies of PDEs with nonstandard growth, see Theorem 2.2 in \cite{Fan-Z}. For further results on the H\"older regularity for minimizers we refer to Acerbi--Mingione~\cite{am01}, in the case $b\equiv g\equiv 0$, Eleuteri-Habermann~\cite{EH}, for the obstacle problem, see also Eleuteri~\cite{ele} and Mingione~\cite{min} for a survey on regularity of minima. We would like also to add that the related weak Harnack inequality still remains an open problem for quasiminimizers under the general framework studied in our paper (see Harjulehto--Kuusi--Lukkari--Marola--Parviainen~\cite{HKLMP} for the case $b\equiv g\equiv 0$).

  Applications of Theorem~\ref{new-Theorem-4-1} are studied in Section~\ref{Section4}. There, we show the H\"older continuity of solutions to a class of PDEs with nonstandard growth of type
  \[
  -\div A(x,u,\nabla u) = B(x,u,\nabla u),
  \]
  under growth assumptions on $A$ and $B$, see \eqref{AB-cond} and \eqref{AB-exp-cond} for details. Namely,
  we prove that solutions are quasiminimizers of $\mathcal{F}_{\Om}$ with $b$ and $g$ defined in terms of growth parameters of $A$ and $B$, see Theorem~\ref{AB-quasim}. The fact that, under our growth assumptions, the coefficient $b$ may depend on $x\in \Om$ allows us to cover wider classes classes of PDEs than those studied so far in the literature. We illustrate our discussion with Examples 1 and 2, also cf. Theorem 2.2 in Fan--Zhao~\cite{Fan-Z}.

  Finally, in Appendix we prove Lemmas~\ref{new-lem-2.4} and \ref{new-ren-2.5} formulated in Preliminaries and needed to show Theorems~\ref{new-prop-4.2} and \ref{new-Theorem-4-1}. To our best knowledge, the proofs of those lemmas are not available in the literature for energy functionals under the general growth conditions.
\section{Preliminaries}

A measurable function $p\colon \Om\to [1,\infty]$ is called a \emph{variable exponent}. Let $A\subset\Omega$. We say that $\p$ is a bounded exponent in $A$ if it holds that:
\begin{equation*}
   1 < p^{-}_{A} \le p^{+}_{A} < \infty, \quad \text{where }\quad p^{-}_{A}=\essinf_{A} p \quad
     \text{ and }\quad p^{+}_{A}=\esssup_{A} p.
\end{equation*}
If $A=\Om$ or if the underlying domain is fixed, we will often skip the index and set $p_A=p_\Om=p$.

We say that $p$ is log-H\"older continuous, if there is a constant
$L>0$ such that
\begin{equation*}  \label{eq-def-log-Holder}
|p(x)-p(y)|\leq \frac{L}{\log(e+1/|x-y|)}
 \quad \text{for } x,y\in \Om.
\end{equation*}

The log-H\"older continuity condition plays an important role in the theory of variable exponents, for instance in the results on the boundedness of the maximal Hardy-type operators and in the studies of density of smooth functions in the variable exponent Sobolev spaces, see e.g. Chapters 4 and 9 in Diening--Harjulehto--H{\"a}st{\"o}--R{\r u}{\v z}i{\v c}ka~\cite{DHHR}.

In what follows we will assume that a variable exponent $\p$ satisfies one of the following assumptions:
\begin{align}\label{p-assump}
&\hbox{(1) } \p \hbox{ is a bounded Lipschitz continuous,} \nonumber \\
& \phantom{AAAAAAAA} \hbox{ or } \\
&\hbox{(2) } \p \hbox{ is a bounded log-H\"older continuous and } \|\nabla p\|_{L^{s}(\Om)}<\infty \hbox{ for some } s>n. \nonumber
\end{align}

For background on variable exponent function spaces we refer to the
monograph \cite{DHHR}.

We define a \emph{(semi)modular} on the set of measurable functions
by setting
\[
\varrho_{L^{p(\cdot)}(\Omega)}(u) :=\int_{\Omega} |u(x)|^{p(x)}\,dx;
\]
here we use the convention $t^\infty = \infty \chi_{(1,\infty]}(t)$
in order to get a left-continuous modular, see
\cite[Chapter~2]{DHHR} for details. The \emph{variable
exponent Lebesgue space $L^{p(\cdot)}(\Omega)$} consists of all
measurable functions $u\colon \Omega\to \R$ for which the modular
$\varrho_{L^{p(\cdot)}(\Omega)}(u/\mu)$ is finite for some $\mu >
0$. The \emph{Luxemburg norm} on this space is defined as
\[
\|u\|_{L^{p(\cdot)}(\Omega)}:= \inf\Big\{\mu > 0\,\colon\,
\varrho_{L^{p(\cdot)}(\Omega)}\big(\tfrac{u}\mu\big)\leq 1\Big\}.
\]
Equipped with this norm, $L^{p(\cdot)}(\Omega)$ is a Banach space.
The variable exponent Lebesgue space is a special case of the
Musielak-Orlicz space, cf. Kov{\'a}{\v c}ik--R{\'a}kosn{\'\i}k~\cite{KR} and Cruz-Uribe--Fiorenza~\cite{CruzUF}. For a constant function $p$ it coincides with the classical Lebesgue space.

There is no functional relationship between norm and modular, but we
do have the following useful inequality, the so-called \emph{unit ball property}, see Lemmas 3.2.4 and 3.2.5 in \cite{DHHR}:
\begin{equation}\label{unit-ball-prop}
\min\Big\{ \varrho_{L^{p(\cdot)}(\Omega)}(u)^\frac1{p^-}, \varrho_{L^{p(\cdot)}(\Omega)}(u)^\frac1{p^+} \Big\}
\le
\| u\|_{L^{p(\cdot)}(\Omega)}
\le
\max\Big\{ \varrho_{L^{p(\cdot)}(\Omega)}(u)^\frac1{p^-}, \varrho_{L^{p(\cdot)}(\Omega)}(u)^\frac1{p^+} \Big\}.
\end{equation}

If $\Om$ is a measurable set of finite measure and $p$ and $q$ are
variable exponents satisfying $q\leq p$, then $L^{p(\cdot)}(\Om)$
embeds continuously into $L^{q(\cdot)}(\Om)$. In particular, every
function $u\in L^{p(\cdot)}(\Omega)$ also belongs to
$L^{p_\Omega^-}(\Omega)$. The \emph{variable exponent H\"older inequality}
takes the form
\begin{equation*}\label{Hold-ineq}
\int_\Omega u v \,dx \le 2 \, \|u\|_{L^{p(\cdot)}(\Omega)} \|v\|_{L^{p'(\cdot)}(\Omega)},
\end{equation*}
where $p'$ is the point-wise \textit{conjugate exponent}, $1/p(x)
+1/p'(x)\equiv 1$.

In what follows we will frequently appeal to the pointwise \emph{Young inequality} and its parameter variant for a given $\epsilon \in(0, 1]$. For the sake of completeness of discussion let us formulate this inequality:
\begin{equation*}\label{young-ineq}
 \int_{\Om} u(x)v(x) \,dx \leq \int_{\Om}\tfrac{\epsilon^{\px}}{\px} u(x)^{\px}\,dx +\int_{\Om}\tfrac{\epsilon^{-p'(x)}}{p'(x)} v(x)^{p'(x)}\,dx.
\end{equation*}

The \emph{variable exponent Sobolev space $W^{1,p(\cdot)}(\Omega)$}
consists of functions $u\in L^{p(\cdot)}(\Omega)$ whose
distributional gradient $\nabla u$ belongs to
$L^{p(\cdot)}(\Omega)$. The variable exponent Sobolev space
$W^{1,p(\cdot)}(\Omega)$ is a Banach space with the norm
\begin{displaymath}
\|u\|_{L^{p(\cdot)}(\Omega)}+\|\nabla u\|_{L^{p(\cdot)}(\Omega)}.
\end{displaymath}
In general, smooth functions are not dense in the variable exponent
Sobolev space \cite[Section~9.2]{DHHR}, but the log-H\"older condition suffices
to guarantee that they are \cite[Section~8.1]{DHHR}.
In this case, we define \emph{the Sobolev space with zero
boundary values}, $W_0^{1,\p}(\Omega)$, as the closure of $C_0^\infty(\Omega)$
in $W^{1,\p}(\Omega)$. The local Sobolev space $\Wpxloc(\Om)$ is defined in a similar way as in the constant exponent case.

Another fundamental tool employed in the paper is the concept of a \emph{level set}, see e.g. a book by Giusti~\cite{Giusti}. Denote by $B_R$ a ball in $\mathbb{R}^n$ with radius $R>0$. If $u:\Om\to\R$ and $k\in\R$, then we set
\[
 A(k, R):=\{x\in \Om: u(x)>k\}\cap B_R.
\]
Furthermore, we recall notation
\[
 M(u,R)=\sup_{B_R}u,\qquad  m(u,R)=\inf_{B_R}u,\qquad \osc(u, R)=M(u,R)-m(u,R).
\]
By $f_A$ we denote the integral average of function $f$ over set $A$, that is
\[
 f_A:=\vint_{A} f dx=\frac{1}{|A|}\int_{A} f dx.
\]
For the sake of convenience of notation and in order to simplify the presentation (especially in the proof of Theorem~\ref{new-prop-4.2}), we will slightly abuse the above notation for the average value over the level sets $A(k,R)$ and denote
\[
 \vint_{A(k, R)} f dx=\frac{1}{|B_R|}\int_{A(k, R)} f dx.
\]

We will now recall basic definitions for quasiminimizers in the variable exponent setting.

Let $\Om\subset \R^n$ be a bounded domain and let $f=f(x, t, p):\Om\times\R\times\Rn\to \R$ for $n\geq 1$ be a Carath\'eodory function, i.e. measurable as a function of $x$ for every $(t,p)$ and continuous in $(t,p)$ for almost every $x\in \Om$. Define an energy functional
$$F_{\Om}(u)=\int_{\Om}f(x, u(x), \nabla u(x))dx.$$
\begin{defn}\label{quasi-def}
 Let $K\geq 1$. We say that a function $u\in \Wpxloc(\Om)$ is a \emph{$K$-quasiminimizer of $F_\Om$} if for every open $\Om'\Subset \Om$ and for all $v\in \Wpxloc(\Om)$ such that $u-v\in \Wpxzero(\Om')$ we have
 \[
  F_{\Om'}(u)\leq K F_{\Om'}(v).
 \]
 Equivalently, $u$ is $K$-quasiminimizer if for every $\phi\in \Wpxloc(\Om)$ with $\supp \phi\Subset \Om$ we have
 \[
  F_{\supp \phi}(u)\leq K F_{\supp \phi}(u+\phi).
 \]
\end{defn}
If $K=1$, then we retrieve the definition of local minimizers, i.e. a local minimum of $F_\Om$ is a $1$-quasiminimum.

Following the discussion in Fan-Zhao, see \cite[Definition 2.2]{Fan-Z}, we say that a function $u\in \Wpxloc(\Om)$ is a \emph{local quasiminimizer of $F$} if for every $x\in \Om$ there exists a neighborhood $\Om_x\subset \Om$ of $x$ such that $u$ is a quasiminimizer of $F$ in $\Om_x$. From the point of view of applications of our results on local H\"older continuity of quasiminimizers it will often suffice to show that a function is a local quasiminimizer, cf. Section~\ref{Section4}.

Quasiminimizers have been studied in various settings and contexts: in the Euclidean setting, see e.g. Giaquinta--Giusti~\cite{Gia-Giu} and Giusti~\cite{Giusti}, in the setting of metric spaces, see e.g. Kinnunen--Martio~\cite{KinMar}, in the variable exponent setting, see e.g. Harjulehto--Kuusi--Lukkari--Marola--Parviainen~\cite{HKLMP}. One also studies relations between quasiminimizers and elliptic equations, see e.g.~\cite{Gia-Giu}, Martio~\cite{Mar1} and quasiminimizers in  the parabolic setting, see e.g. Kinnunen--Masson~\cite{KinMas} and references therein.

In \cite{Toivanen-2}, Toivanen showed the local boundedness of local minimizers of
\begin{equation}\label{energy-fun}
\mathcal{F}_{\Om}(u)=\int_\Omega f(x,u,\nabla u)\,dx,
\end{equation}
where $f:\Om\times \R\times \R^n\to \R$ is subject to the general structural conditions
\begin{equation}\label{F-growth}
|z|^{p(x)} - b(x)|y|^{r(x)}-g(x) \leq f(x,y,z) \leq \mu|z|^{p(x)} + b(x)|y|^{r(x)} + g(x).
\end{equation}
As for coefficients, we a priori assume the following:
\begin{align}\label{F-growth2}
  & \mu\geq 1 \hbox{ is a constant, }\quad p(x)\leq r(x)\leq p^{*}(x) \hbox{ for } x\in \Om, \nonumber \\
  & b\geq 0 \hbox{ and } b\in L^{\sigmadot}(\Om) \hbox{ for }\sigma \in C^{0}(\Om) \hbox{ such that } \sigma(x)>\frac{p^{*}(x)}{p^{*}(x)-r(x)} \hbox{ for } x \in \Om, \\
  & g\geq 0 \hbox{ and } g\in L^{t}(\Om), t>\frac{n}{p^-}. \nonumber
\end{align}
Here $p^*(\cdot) = \frac{p(\cdot)n}{n - p(\cdot)}$ is the Sobolev conjugate exponent for $p < n$. For $p > n$ functions in $W^{1,\p}(\Om)$ are H\"older continuous by the Sobolev embedding theorem. Thus we will limit our discussion only to the case $p < n$.

Let us now define an auxiliary energy functional. It will turn out that in several cases our discussion of properties of the energy $\mathcal{F}_{\Om}$ can be reduced to the analysis of the analogous properties of $\calFone_{, \Om}$:
\begin{equation}\label{def-F1}
\calFone_{, \Om}(u):=\int_{\Om}(\mu|\nabla u|^{\px}+b(x)|u|^{\rx}+g(x)+h(x))\,dx,
\end{equation}
where $\mu$, $p, r$, $b, g$ are the same as in the definition of $\calF_\Om$, whereas $h(x):=c\,b(x)^{\frac{\pstarx}{\pstarx-\rx}}$ for $x\in \Om$ with constant $c$ depending among other parameters on $K, p, r$, $\|b\|_{L^{\sigmadot}(\Om)}, \|g\|_{L^t(\Om)}$ and $\|u\|_{\Wpxloc(\Om)}$. In what follows we will omit the symbol of the domain and write $\calF(u)$ and $\calFone(u)$ if the domain is fixed or clear from the context of discussion.

\begin{lem}\label{new-lem-2.4}
Every bounded $K$-quasiminimizer of $\mathcal{F}_{\Om}$ is a $K'$-quasiminimizer of $\calFone_{, \Om'}$ for any $\Om'\Subset \Om$, where $K'=2(K+c)$ with $c$ depending on $p^+, p^-, n, r^+, \mu$, norms of coefficients $\|b\|_{L^{\sigma(\cdot)}},\|g\|_{L^{t}}$, also on $\|u\|_{\Wpxloc}$.
\end{lem}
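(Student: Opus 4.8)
The plan is to exploit the two-sided structure condition \eqref{F-growth} directly: the lower bound on $f$ controls $\calF_{\Om'}(u)$ from below by a ``clean'' $p(\cdot)$-energy term minus the lower-order perturbations, while the upper bound lets me estimate $\calF_{\Om'}(u+\phi)$ from above by the $\calFone$-type integrand evaluated at $u+\phi$. The quasiminimality inequality $F_{\supp\phi}(u)\leq KF_{\supp\phi}(u+\phi)$ then has to be massaged into $\calFone_{,\supp\phi}(u)\leq K'\calFone_{,\supp\phi}(u+\phi)$. Concretely, fix $\Om'\Subset\Om$ and a test function $\phi$ with $\supp\phi\Subset\Om'$; set $D=\supp\phi$. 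From \eqref{F-growth},
\[
\int_D|\nabla u|^{\px}\,dx\;\leq\;\int_D f(x,u,\nabla u)\,dx+\int_D b(x)|u|^{\rx}\,dx+\int_D g(x)\,dx\;\leq\;K\calF_D(u+\phi)+\int_D b(x)|u|^{\rx}\,dx+\int_D g\,dx,
\]
and again by the upper bound in \eqref{F-growth},
\[
\calF_D(u+\phi)\;\leq\;\int_D\bigl(\mu|\nabla(u+\phi)|^{\px}+b(x)|u+\phi|^{\rx}+g(x)\bigr)\,dx.
\]
Adding $\mu\int_D|\nabla u|^{\px}\,dx$ to both sides and comparing with the definitions \eqref{def-F1}, the task reduces to absorbing the three ``extra'' terms $\int_D b|u|^{\rx}$, $\int_D b|u+\phi|^{\rx}$ and $\int_D g$ into a controlled multiple of $\calFone_D(u)$ and $\calFone_D(u+\phi)$, at the cost of the additive term $\int_D h(x)\,dx$ built into $\calFone$.

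The heart of the matter is therefore estimating $\int_D b(x)|w|^{\rx}\,dx$ for $w\in\{u,u+\phi\}$ by $\varepsilon\int_D|\nabla w|^{\px}\,dx+C\int_D h\,dx$ plus manageable lower-order pieces, where $h=c\,b^{\pstarx/(\pstarx-\rx)}$. This is exactly where the assumptions \eqref{F-growth2} are designed to be used: since $r(\cdot)\le\pstar$ and $\sigma>\pstar/(\pstar-r)$, one applies the variable-exponent Young (or H\"older) inequality with exponents $\pstar/r$ and its conjugate $\pstar/(\pstar-r)$ to split $b|w|^{\rx}=\bigl(b\cdot 1\bigr)|w|^{\rx}$ into a term with $|w|^{\pstar}$ and a term with a power of $b$; the Sobolev--Poincar\'e inequality in the variable-exponent setting (valid under \eqref{p-assump}) then converts $\int|w|^{\pstarx}$ into $\int|\nabla w|^{\px}$ up to a constant depending on $\|w\|_{\Wpxloc}$ and the domain, which is why the constant $c$ in $h$ — and hence $K'$ — is allowed to depend on $\|u\|_{\Wpxloc}$ (note $\|u+\phi\|_{\Wpxloc}$ is controlled once we fix $u$ and range over $\phi$, by testing boundedness and an a priori estimate). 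The factor $\varepsilon$ coming from Young's inequality can be chosen small enough that the $\mu\varepsilon\int_D|\nabla w|^{\px}$ term is absorbed into the $\mu\int_D|\nabla w|^{\px}$ already present in $\calFone_D(w)$; this is the source of the factor $2$ in $K'=2(K+c)$.

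**The main obstacle** I anticipate is the bookkeeping of the $b$-term when $b=b(x)$ is merely in $L^{\sigmadot}$ rather than bounded: one cannot simply pull $\|b\|_\infty$ out, so the pairing $b|w|^{\rx}$ must be handled by the variable-exponent H\"older inequality with the three factors $b$, $|w|^{\rx}$ and a ``slack'' exponent coming from the strict inequality $\sigma>\pstar/(\pstar-r)$, and one must verify that the resulting power of $b$ is integrable — which is precisely $b^{\pstar/(\pstar-r)}\in L^{1}$, guaranteed by $b\in L^{\sigmadot}$ with $\sigmadot>\pstar/(\pstar-r)$ and $|\Om|<\infty$ — so that $\int_D h<\infty$ and $\calFone$ is well-defined and finite. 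A secondary subtlety is that all these estimates must be made locally and uniformly over $\phi$ with $\supp\phi\Subset\Om'$, so one should first fix an intermediate domain $\Om''$ with $\Om'\Subset\Om''\Subset\Om$ and carry out the Sobolev embeddings on $\Om''$, using the boundedness of $u$ to bound $\|u+\phi\|_{L^\infty(\Om'')}$ crudely when needed; the continuity of $\sigma$ on $\Om$ (compactness of $\overline{\Om'}$) ensures a uniform gap $\sigma-\pstar/(\pstar-r)\ge\delta_0>0$ on $\Om'$, which makes all the exponent arithmetic uniform. Once these points are in place, collecting constants gives $K'=2(K+c)$ with $c$ depending on the listed quantities, completing the reduction.
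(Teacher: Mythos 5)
Your overall architecture (use the two-sided bound \eqref{F-growth}, split $b|\cdot|^{\rdot}$ by Young's inequality with exponents $\pstar/\rdot$ and $\pstar/(\pstar-\rdot)$, convert the resulting $\pstar$-modular into a gradient modular via the Sobolev embedding, absorb a small multiple of the gradient term, and park the power of $b$ in the extra coefficient $h$) is the same as the paper's. However, there are two genuine gaps at precisely the step you call the heart of the matter, and they are the points the paper's proof is built around.

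First, you apply the Sobolev--Poincar\'e inequality to $w\in\{u,u+\phi\}$ on $D=\supp\phi$ in order to convert $\int_D|w|^{\pstar}$ into $\int_D|\nabla w|^{\p}$. Neither $u$ nor $u+\phi$ has zero boundary values on $D$, so this conversion is not available; with the full $W^{1,\p}$-norm on the right-hand side you lose the smallness needed for absorption, and for $w=u+\phi$ that norm is not under control in any case. The paper avoids this by writing $|u|^{\rx}\le 2^{r^+}\left(|\phi|^{\rx}+|u+\phi|^{\rx}\right)$, keeping $b(x)|u+\phi|^{\rx}$ untouched as part of $\calFone(u+\phi)$ on the right-hand side, and applying Young and then the Sobolev embedding only to $\phi\in\Wpxzero$, the one function in sight that does have zero boundary values.

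Second, your parenthetical claim that $\|u+\phi\|_{\Wpxloc}$ is controlled once $u$ is fixed, ``by testing boundedness and an a priori estimate,'' is unjustified: a competitor may have arbitrarily large energy. The paper supplies the missing device in \eqref{Lem2.4:ineq-trick}: either $\int_{\supp\phi}\mu|\nabla(u+\phi)|^{\px}\ge\int_{\supp\phi}\left(\mu|\nabla u|^{\px}+b(x)|u|^{\rx}\right)$, in which case $u$ trivially satisfies the desired inequality for that $\phi$, or the reverse strict inequality holds, which yields the a priori bound $\int_{\supp\phi}|\nabla\phi|^{\p}\le c(u)$. This bound is indispensable twice: it controls the competitor, and it linearizes the Sobolev estimate, which a priori only gives $\int|\phi|^{\pstar}\le c\left(\int|\nabla\phi|^{\p}\right)^{\theta}$ with exponents $\theta>1$ rather than the linear bound $\le C\int|\nabla\phi|^{\p}$ needed to split the result into $\int|\nabla u|^{\p}+\int|\nabla(u+\phi)|^{\p}$ and absorb the first piece into the left-hand side. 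Without this reduction your absorption argument does not close.
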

To our best knowledge, the proof of this lemma is not available in the literature for such general assumptions, i.e. for $b=b(x)$ with $b\in L^{\sigmadot}$ and $g\in L^t$ as in the growth assumptions \eqref{F-growth2}. For this reason and for the sake of completeness of the discussion we provide the proof of Lemma~\ref{new-lem-2.4} in the Appendix.
\begin{rem}\label{Remark-K-and-norm}
 The quasiminimizing constant $K'$ in Lemma~\ref{new-lem-2.4} depends, among other parameters, on the Sobolev norm of $u$. Such a dependence is not a novelty and can be found in the literature, see e.g discussion at (2.1) in Giaquinta-Giusti \cite{Gia-Giu}, Theorem 6.1 in Giusti~\cite{Giusti} and the proof of Theorem 2.1 in Fan--Zhao~\cite{Fan-Z}. In a consequence some constants in Theorems~\ref{new-prop-4.2} and \ref{new-Theorem-4-1} may depend on $\|u\|_{\Wpxloc(\Om)}$ as well.
 However, this does not affect the validity of the local H\"older continuity result in Theorem~\ref{new-Theorem-4-1}.
 In general, one can eliminate the dependence of $K'$ on $\|u\|_{\Wpxloc}(\Om)$ by considering a family of uniformly bounded quasiminimizers (cf. Chiad\`o Piat--Coscia~\cite{Chiado-Coscia}).
\end{rem}

The next lemma will be needed to conclude the proof of Theorem~\ref{new-prop-4.2}, see also Remark 2.5 in \cite{Chiado-Coscia}. As in the case of Lemma~\ref{new-lem-2.4} we could not find a proof of this result in the literature and, therefore, decided to present the complete argument, see the Appendix.

\begin{lem}\label{new-ren-2.5}
 If $u$ is a bounded $K$-quasiminimizer of $\calFone_{, \Om}$, then so is $-u$. Moreover, let $\Om'\Subset \Om$. Then $u-k$ is a bounded $K_1$-quasiminimizer of the following energy functional $\calFtwo_{, \Om'}$ for $k\leq \sup_{\Om'}|u|$:
\begin{equation}\label{new-ren-enrg}
 \calFtwo_{, \Om'}(u):=\int_{\Om}(\mu|\nabla u|^{\px}+b(x)|u|^{\rx}+g(x)+\tilde h(x))\,dx,
\end{equation}
for $K_1=2^{r^+}(2^{r^+}+1)K$ and $\tilde h(x)=c(K, r^{+}, \sup_{\Om'}|u|) b(x)$ with $c=\frac{2^{r+}[(2^{r^+}+1)K+1](1+\sup_{\Om'}|u|)^{r^+}}{2^{r+}(2^{r+}+1)K-1}$.
\end{lem}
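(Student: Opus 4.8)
The plan is to derive both claims from structural features of $\calFone$: it involves a competitor only through $|\nabla u|$ and $|u|$ (the summands $g,h$ being fixed and nonnegative), so it is unchanged under $u\mapsto -u$; and under a translation $u\mapsto u-k$ its only genuine modification is that $b(x)|u|^{\rx}$ is replaced by $b(x)|u-k|^{\rx}$, which by convexity costs only a multiple of $b(x)|u|^{\rx}$ plus a term of size $\lesssim(1+\sup_{\Om'}|u|)^{r^{+}}b(x)$ — and absorbing that last term is exactly the purpose of the new summand $\tilde h=c\,b$ in $\calFtwo$.

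\emph{The reflection $-u$.} For every measurable $v$ one has $\calFone_{,\Om''}(-v)=\calFone_{,\Om''}(v)$, since $|\nabla(-v)|=|\nabla v|$ and $|-v|=|v|$ pointwise while $g,h$ do not see $v$ (and the constant in $h$ depends on $u$ only through $\|u\|_{\Wpxloc(\Om)}=\|-u\|_{\Wpxloc(\Om)}$). Fix an open $\Om''\Subset\Om$ and a competitor $v$ with $(-u)-v\in\Wpxzero(\Om'')$; then $u-(-v)=-\bigl((-u)-v\bigr)\in\Wpxzero(\Om'')$, so $-v$ is admissible for $u$, and $K$-quasiminimality of $u$ together with the evenness just noted gives $\calFone_{,\Om''}(-u)=\calFone_{,\Om''}(u)\le K\,\calFone_{,\Om''}(-v)=K\,\calFone_{,\Om''}(v)$. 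Since $-u$ is bounded whenever $u$ is, this proves the first assertion.

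\emph{The translate $u-k$.} Fix $\Om''\Subset\Om'$ and a competitor $v$ with $(u-k)-v\in\Wpxzero(\Om'')$; put $w:=v+k$, so that $u-w\in\Wpxzero(\Om'')$ and $w$ is admissible for $u$ over $\Om''$. Write $M:=\sup_{\Om'}|u|$, so $|u|\le M$ a.e.\ on $\Om''$ and $|k|\le M$, hence $|k|^{\rx}\le(1+M)^{r^{+}}$. Translation leaves the gradient terms and $g$ untouched, $|\nabla(u-k)|=|\nabla u|$ and $|\nabla w|=|\nabla v|$. The elementary convexity bound $(a+b)^{\rx}\le 2^{\rx}(a^{\rx}+b^{\rx})\le 2^{r^{+}}(a^{\rx}+b^{\rx})$ for $a,b\ge 0$, applied once to $|u-k|^{\rx}$ and once to $|v+k|^{\rx}$, then multiplied by $b$ and integrated over $\Om''$, yields
\[
\int_{\Om''}b\,|u-k|^{\rdot}\,dx\le 2^{r^{+}}\!\int_{\Om''}b\,|u|^{\rdot}\,dx+2^{r^{+}}(1+M)^{r^{+}}\!\int_{\Om''}b\,dx
\]
together with the analogous bound relating $\int_{\Om''}b\,|v+k|^{\rdot}\,dx$ to $\int_{\Om''}b\,|v|^{\rdot}\,dx$. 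Now chain the estimates: comparing $\calFtwo_{,\Om''}(u-k)$ with $\calFone_{,\Om''}(u)$ (gradient and $g$ parts agree, the $b|u-k|^{\rdot}$ part is bounded above, and $\tilde h=c\,b$ contributes a multiple of $\int_{\Om''}b$), using $2^{r^{+}}\ge 1$ one gets $\calFtwo_{,\Om''}(u-k)\le 2^{r^{+}}\calFone_{,\Om''}(u)+C_{1}\int_{\Om''}b\,dx$; invoking $K$-quasiminimality of $u$ and then comparing $\calFone_{,\Om''}(w)$ with $\calFtwo_{,\Om''}(v)$ the same way (now via convexity for $|v+k|^{\rdot}$, the two functionals differing only in their competitor-independent term, whose discrepancy is again routed into the $\int_{\Om''}b$ budget supplied by $\tilde h$) one gets $\calFone_{,\Om''}(w)\le 2^{r^{+}}\calFtwo_{,\Om''}(v)+C_{2}\int_{\Om''}b\,dx$, whence
\[
\calFtwo_{,\Om''}(u-k)\le 2^{2r^{+}}K\,\calFtwo_{,\Om''}(v)+C_{3}\int_{\Om''}b\,dx,\qquad C_{3}=C_{3}(r^{+},M,c,K).
\]
Finally, since $\tilde h=c\,b$ is a summand of $\calFtwo$ we have $\calFtwo_{,\Om''}(v)\ge c\int_{\Om''}b\,dx$, so $C_{3}\int_{\Om''}b\,dx\le (C_{3}/c)\,\calFtwo_{,\Om''}(v)$; choosing $c$ so that $2^{2r^{+}}K+C_{3}/c$ equals $2^{r^{+}}(2^{r^{+}}+1)K$ gives the stated $K_{1}$, and since $C_{3}$ is itself affine in $c$ this is a linear equation for $c$ whose solution is the displayed fraction, the denominator $2^{r^{+}}(2^{r^{+}}+1)K-1$ being precisely the coefficient produced by that solve. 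Boundedness of $u-k$ is immediate.

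\emph{Main obstacle.} The only real work is the constant bookkeeping: arranging the two applications of the convexity inequality and the final absorption of $\int_{\Om''}b\,dx$ so that the effective constant is \emph{exactly} $2^{r^{+}}(2^{r^{+}}+1)K$ and the threshold for $c$ is \emph{exactly} the stated fraction, rather than merely ``some $K_{1}$, some large $c$''. Two minor points should be kept in mind: competitors $v$ need not be bounded — only $u$ (hence $u-k$ and the bounds $|u|,|k|\le M$) enters in bounded form, which is all the estimates use — and the passage from the competitor-independent term $g+h$ of $\calFone$ to $g+\tilde h$ of $\calFtwo$ must be organised so that this change too is dominated by the $\int_{\Om''}b\,dx$ budget carried by $\tilde h=c\,b$.
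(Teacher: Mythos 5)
Your proposal follows essentially the paper's own route: the reflection claim via the evenness of $\calFone$ in its argument, and the translation claim via two applications of the convexity bound $|a+b|^{\rx}\le 2^{r^+}(|a|^{\rx}+|b|^{\rx})$ (once to $u-k$, once to the back-shifted competitor) sandwiching one use of the quasiminimality of $u$, with the leftover $b(x)|k|^{\rx}\le (1+\sup_{\Om'}|u|)^{r^+}b(x)$ absorbed by the new summand $\tilde h=c\,b$. The arithmetic is arranged differently, though: the paper applies quasiminimality \emph{twice}, separately to the $(\mu|\nabla u|^{\px}+g)$ part and to the $2^{r^+}b|u|^{\rx}$ part, which is exactly how the factor $(2^{r^+}+1)K$ and then $K_1=2^{r^+}(2^{r^+}+1)K$ arise, and it chooses $c$ so that the additive leftover vanishes identically; your single application yields the leading factor $2^{2r^+}K$ and then a multiplicative absorption of $C_3\int b$ into $\calFtwo(v)$, which is a different linear equation for $c$. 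Carried out, your scheme gives an admissible but \emph{different} pair $(K_1,c)$ (roughly $c=2^{r^+}(1+\sup_{\Om'}|u|)^{r^+}\tfrac{2^{r^+}K+1}{2^{r^+}K-1}$), so the claim that your solve reproduces the displayed fraction exactly is not correct; this is harmless for the lemma's use but should not be asserted.

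The one step that does not go through as written is your parenthetical claim that the discrepancy between the competitor-independent terms of $\calFone$ (namely $g+h$ with $h=c_0\,b^{p^*(x)/(p^*(x)-r(x))}$) and of $\calFtwo$ (namely $g+\tilde h$ with $\tilde h=c\,b$) can be ``routed into the $\int b$ budget''. Since $b^{p^*(x)/(p^*(x)-r(x))}$ is a superlinear power of $b$, it is not dominated by any constant multiple of $b$ when $b$ is unbounded, so the inequality $\calFone(w)\le 2^{r^+}\calFtwo(v)+C_2\int b$ leaves an uncontrolled $\int h$ on the left-hand side. To be fair, the paper's own proof is equally silent here (its final display identifies $K_1\int(\cdots+g+h)$ with part of $K_1\calFtwo(u-k+\phi)$, which would require $h\le\tilde h$), so this is an imprecision you share with the source rather than a defect peculiar to your argument; but it is a genuine issue, not something the $\tilde h$ term automatically fixes, and it would be resolved by letting $\calFtwo$ retain the summand $h$ alongside $\tilde h$.
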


\section{H\"older continuity of quasiminimizers}\label{Section3}

In this section we show the main result of the paper, namely the H\"older continuity of quasiminimizers of the energy functional \eqref{energy-fun} under the growth conditions \eqref{F-growth} and \eqref{F-growth2}, see Theorem~\ref{new-Theorem-4-1}. Its proof relies on a number of auxiliary results which we present first. In our approach we follow the steps of the reasoning presented in Chiad\`o Piat--Coscia~\cite{Chiado-Coscia}. However, our work extends \cite{Chiado-Coscia} as we now study energy functionals under the more general growth conditions.

\begin{lem}[Caccioppoli-type inequalities]\label{Caccioppoli}
Let $\p$ be a bounded continuous variable exponent, and let $u$ be a local $K$-quasiminimizer of $\calF_\Om$. Then for each
$x_0 \in \Omega$ there exists $R_0>0$ such that for all
$0<r < R \leq R_0$ and for all $k\geq 0$ we have
\begin{align*}
 \int_{B_r} |\nabla (u-k)_+|^{p(x)}\,dx & \leq C\left( \int_{B_R}
\left(\frac{u-k}{R-r}\right)^{p(x)}\,dx + (1+k^{p^+_{B_R}})|A(k,R)|^{1-\frac{1}{t}}\right), \\
 \int_{B_r} |\nabla (u-k)_+|^{p^-}\,dx &\leq C\left( \int_{B_R}
\left(\frac{u-k}{R-r}\right)^{p^+_{B_R}}\,dx + (1+k^{p^+_{B_R}})|A(k,R)|^{1-\frac{1}{t}} + 2|A(k,R)|\right).
\end{align*}
Here $C$ depends only on $p^+$, $\|b\|_{L^{\sigmadot}}$, $\|g\|_{L^t}$ and the quasiminimizing constant $K$.
\end{lem}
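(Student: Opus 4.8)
\emph{Proof plan.}
The strategy is to run the de~Giorgi/Caccioppoli scheme with the competitor obtained by truncating $u$ above at level $k$, keeping the exponent $\px$ under every integral. Fix $x_0\in\Om$ and $R_0>0$ with $\overline{B_{R_0}(x_0)}\subset\Om$; for $0<r<R\le R_0$ write $p^+:=p^+_{B_R}$, $p^-:=p^-_{B_R}$, all balls being centred at $x_0$. Given $k\ge0$ and $r\le\rho<\tau\le R$, I would take $\eta\in C_0^\infty(B_\tau)$ with $\eta\equiv1$ on $B_\rho$, $0\le\eta\le1$, $|\nabla\eta|\le 2/(\tau-\rho)$, and test Definition~\ref{quasi-def} with $\phi:=-\eta^{p^+}(u-k)_+\in\Wpxloc(\Om)$ (so $\spt\phi\Subset\Om$), i.e.\ with $v:=u+\phi$. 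Then $v=u$, $\nabla v=\nabla u$ a.e.\ on $\{u\le k\}$, while on $A(k,R)$ one has $\nabla v=(1-\eta^{p^+})\nabla u-p^+\eta^{p^+-1}(u-k)\nabla\eta$; hence, using $0\le1-\eta^{p^+}\le1$ (so $(1-\eta^{p^+})^{\px}\le1-\eta^{p^+}$), $0\le\eta\le1$ and $(a+b)^{\px}\le2^{p^+-1}(a^{\px}+b^{\px})$,
\[
|\nabla v|^{\px}\le 2^{p^+-1}(1-\eta^{p^+})|\nabla u|^{\px}+2^{p^+-1}(p^+)^{p^+}|\nabla\eta|^{\px}(u-k)_+^{\px}\quad\text{a.e.\ on }A(k,R).
\]

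Next I would feed $v$ into $\calF_{\spt\phi}(u)\le K\calF_{\spt\phi}(v)$, estimating the left side below by the lower bound in \eqref{F-growth} for $u$ and the right side above by the upper bound for $v$, cancelling the integrand common to $u$ and $v$ on $\{u\le k\}$, and using $0\le v\le u$ on $A(k,R)$ (so $b|v|^{\rx}\le b|u|^{\rx}$), to arrive at
\[
\int_{\spt\phi}|\nabla u|^{\px}\,dx\le K\mu\int_{\spt\phi}|\nabla v|^{\px}\,dx+(K+1)\int_{A(k,R)}\bigl(b|u|^{\rx}+g\bigr)\,dx .
\]
For $K>1$ this cancellation needs a short argument: with the quasicontinuous representative of $u$, $\spt\phi$ meets $\{u\le k\}$ only inside $\{u=k\}$, where $\nabla u=0$ a.e., so no spurious term $(K-1)\int_{\{u\le k\}}|\nabla u|^{\px}$ arises; alternatively one first proves the estimate for a.e.\ $k$, where $|\{u=k\}|=0$, and extends it to all $k$ by monotone convergence.

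Since $\spt\phi\subset B_\tau$ and $1-\eta^{p^+}\equiv0$ on $B_\rho$, substituting the pointwise bound for $|\nabla v|^{\px}$ and setting $Z(\rho):=\int_{A(k,\rho)}|\nabla u|^{\px}\,dx$ I would obtain, with constants $\theta,c_1,c_2$ depending only on $K,\mu,p^+$,
\[
Z(\rho)\le\theta\bigl(Z(\tau)-Z(\rho)\bigr)+c_1\int_{B_R}\Bigl(\tfrac{(u-k)_+}{\tau-\rho}\Bigr)^{\px}dx+c_2\int_{A(k,R)}\bigl(b|u|^{\rx}+g\bigr)dx .
\]
Rewriting as $Z(\rho)\le\tfrac{\theta}{1+\theta}Z(\tau)+\tfrac1{1+\theta}(\cdots)$ (hole filling, which produces the ratio $\tfrac{\theta}{1+\theta}<1$) and applying a variable-exponent version of the standard iteration lemma — keeping $\px$ inside the integral, so that $(\tau-\rho)^{-\px}$ turns into $(R-r)^{-\px}$ after summing a geometric series — gives $\int_{B_r}|\nabla(u-k)_+|^{\px}\,dx\le C\bigl(\int_{B_R}(\tfrac{u-k}{R-r})^{\px}dx+\int_{A(k,R)}(b|u|^{\rx}+g)\,dx\bigr)$. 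It then remains to bound the forcing integral: $\int_{A(k,R)}g\,dx\le 2\|g\|_{L^t}|A(k,R)|^{1-1/t}$ by the variable-exponent H\"older inequality, and, after $|u|^{\rx}\le 2^{r^+-1}\bigl((u-k)_+^{\rx}+k^{\rx}\bigr)$ on $A(k,R)$, $\int_{A(k,R)}b|u|^{\rx}\,dx\le C(1+k^{p^+_{B_R}})|A(k,R)|^{1-1/t}$, the latter using the structural condition $\sigma(x)>\pstarx/(\pstarx-\rx)$ from \eqref{F-growth2} together with the variable-exponent Sobolev embedding; this proves the first inequality. The second follows from the first by splitting $B_r\cap A(k,R)$ into $\{|\nabla u|\le1\}$ (contributing $\le|A(k,R)|$ since $|\nabla u|^{p^-}\le1$ there) and $\{|\nabla u|>1\}$ (where $|\nabla u|^{p^-}\le|\nabla u|^{\px}$), and by replacing $(\tfrac{u-k}{R-r})^{\px}$ with $(\tfrac{u-k}{R-r})^{p^+_{B_R}}$ at the cost of another $|A(k,R)|$ (split according to whether $(u-k)_+\le R-r$); these two pieces produce the term $2|A(k,R)|$.

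\emph{Main obstacle.} I expect the genuinely new difficulty to be the control of the forcing term $\int_{A(k,R)}b(x)|u|^{\rx}\,dx$ by a power of $|A(k,R)|$ when $b$ lies only in $L^{\sigma(\cdot)}$ rather than in $L^\infty$ — this is exactly where the sharp exponent hypothesis $\sigma(x)>\pstarx/(\pstarx-\rx)$ must be exploited. The other delicate points are the cancellation on $\{u\le k\}$ in the second step when $K>1$, and the uniformly careful treatment of the variable exponent in the pointwise estimate and in the iteration lemma, so that $\px$ (and not $p^\pm$) survives under the integrals in the final estimates.
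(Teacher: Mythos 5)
Your proposal is correct and follows essentially the same route as the paper: the truncation competitor $v=u-\eta(u-k)_+$ (your $\eta^{p^+}$ power is an inessential variant), the pointwise bound on $|\nabla v|^{\px}$, hole-filling plus the standard iteration lemma, H\"older in $L^t$ and $L^{\sigma(\cdot)}$ together with the Sobolev embedding for the forcing terms, and the $\pm1$ trick for the constant-exponent inequality. The only difference is organizational: the paper absorbs $b|u|^{\px}$ into the hole-filled quantity and defers the iteration and the $b$-term control to Toivanen~\cite[Lemma 1, Remark 2]{Toivanen-2}, whereas you spell these steps out and correctly single out $\int_{A(k,R)}b|u|^{\rx}\,dx$ with $b\in L^{\sigma(\cdot)}$ as the genuinely new difficulty.
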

\begin{proof}
 Let $\eta \in C^\infty_0(B_R)$ be a test function such that $0 \leq \eta \leq 1$, $\eta = 1$ in $B_r$, and $|\nabla\eta| \leq \frac{2}{R-r}$. Let $w = (u-k)_+ = \max\{u-k,0\}$ and $v = u-\eta w$. Note that $v \leq u$, and $v$ differs from $u$ at most in $A(k, R)$. By the structural conditions~\eqref{F-growth} and the quasiminimizing
property,
\begin{align}
\int_{A(k, R)} |\nabla u|^{p(x)}\,dx &\leq K \int_{A(k, R)} |\nabla v|^{p(x)}\,dx + \int_{A(k, R)}
\left( b(x) \left(|u|^{p(x)}+K|v|^{p(x)}\right)+(1+K)g(x)\right)\,dx \nonumber \\
&\leq K\left( \int_{A(k, R)} |\nabla v|^{p(x)}\,dx + \int_{A(k, R)}
\left( b(x) \left(|u|^{p(x)}+|v|^{p(x)}\right)+2g(x)\right)\,dx \right). \label{Cacc-ineq1}
\end{align}
In $A(k, R)$ we have $u = u(1-\eta) + \eta(w+k)$, $v = u(1-\eta) + \eta k$, and hence $\nabla v =
(1-\eta)\nabla u - (u-k)\nabla\eta$. It follows that in $A(k, R)$ we have
\begin{equation}
|u|^{p(x)} + |v|^{p(x)} \leq C\left( (\eta w)^{p(x)} + |u|^{p(x)}(1-\eta)^{p(x)} +
\eta^{p(x)}k^{p(x)}\right)\label{Cacc-ineq2}
\end{equation}
and
\begin{equation}
|\nabla v|^{p(x)} \leq C\left( (1-\eta)^{p(x)} |\nabla u|^{p(x)} +
|\nabla\eta|^{p(x)} (u-k)^{p(x)} \right). \label{Cacc-ineq3}
\end{equation}
Thus by adding $\int_{A(k, R)} b(x)|u|^{p(x)}\,dx$ to the both sides of \eqref{Cacc-ineq1} and using \eqref{Cacc-ineq2}, \eqref{Cacc-ineq3} we obtain that
\begin{align*}
\begin{split}
\int_{A(k, R)} |\nabla u|^{p(x)} + b(x)|u|^{p(x)} &\leq C_0K\left( \int_{A(k, R)} (1-\eta)^{p(x)} \left( |\nabla u|^{p(x)} + b(x)|u|^{p(x)}\right)\right.\\
&+ \left.\int_{A(k, R)}\left(\frac{u-k}{R-r}\right)^{p(x)} + \int_{A(k, R)} \left(b(x)(\eta w)^{p(x)} + b(x)k^{p(x)}+ g(x)\right)\right)
\end{split}
\end{align*}
for some constant $C_0=C_0(p^+)$. We include $K$ in the constant $C_0$, and from this point on the proof proceeds as in Toivanen \cite[Lemma 1]{Toivanen-2} and \cite[Remark 2]{Toivanen-2}.

In order to prove the second inequality we note that for any $\xi \in \Rn$ it holds that
\[
|\xi|^{p^-} - 1 \leq |\xi|^{p(x)} \leq |\xi|^{p^+} + 1.
\]
Therefore,
\begin{align*}
 \int_{A(k, r)} \left(|\nabla u|^{p^-}-1\right) &\leq \int_{A(k, r)} |\nabla u|^{\px}\\
 &\leq C\left( \int_{B_R} \left(\frac{u-k}{R-r}\right)^{p^+_{B_R}}\,dx + (1+k^{p^+_{B_R}})|A(k,R)|^{1-\frac{1}{t}} + \int_{A(k,R)}1\right).
\end{align*}
The second Caccioppoli estimate follows immediately from this inequality.
\end{proof}

We recall the following Sobolev-Poincar\'e inequality with a variable exponent adapted to our setting and notation, see Proposition 3.1 in  \Chiado Piat--Coscia \cite{Chiado-Coscia}.
\begin{prop}\label{CPC-Prop-3.1}
  Let $p$ be a variable exponent satisfying assumptions \eqref{p-assump}. Then for every $M>0$ there exists a positive radius $R_1 = R_1(M,n,s,\|p\|_{L^s})$ such that for every $\gamma > 1/n - 1/s>0$ there exist two positive constants $\chi = \chi(n,p^-,s,\gamma,\|p\|_{L^s})$ and $c = c(n,p^-,p^+)$ for which the following inequality holds:
\[
\left( \vint_{B_R} \left| \frac{u}{R} \right|^{p(x)\frac{n}{n-1}}\,dx \right)^{\frac{n-1}{n}}
\leq c\,\vint_{B_R} |\nabla u|^{p(x)}\,dx + \chi|\{x\in B_R\,:\,|u|>0\}|^\gamma
\]
for every $B_R \subset \Omega$ with $0 < R \leq R_1$, and every $u \in W^{1,\p}_{0}(B_R)$ such that $\sup_{B_R} |u| \leq M$.
\end{prop}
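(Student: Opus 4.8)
The plan is to reduce the variable-exponent estimate to the classical (constant-exponent) Sobolev--Poincar\'e inequality and then to absorb the extra terms produced by the $x$-dependence of $p$ by means of the log-H\"older (hence $W^{1,s}$, $s>n$) regularity of the exponent, the bound $\sup_{B_R}|u|\le M$, and the freedom to shrink $R$ in a way that may depend on $M$.

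First I would apply the scaled Sobolev--Poincar\'e inequality for $W^{1,1}_0(B_R)$,
\[
\Big(\vint_{B_R}|w|^{\frac{n}{n-1}}\,dx\Big)^{\frac{n-1}{n}}\le c_n\,R\vint_{B_R}|\nabla w|\,dx ,
\]
to $w:=|u|^{\px}$; since $u$ is bounded and $\nabla p\in L^s(\Om)$ with $s>n$ under either alternative of \eqref{p-assump}, the chain rule shows $w\in W^{1,1}_0(B_R)$ with
\[
\nabla w=\px\,|u|^{\px-1}\nabla u+|u|^{\px}\log|u|\,\nabla p ,
\]
so the left-hand side is controlled, before rescaling, by $c_nR(\mathrm{I}+\mathrm{II})$ with $\mathrm{I}:=\vint_{B_R}\px\,|u|^{\px-1}|\nabla u|\,dx$ and $\mathrm{II}:=\vint_{B_R}|u|^{\px}\,\big|\log|u|\big|\,|\nabla p|\,dx$. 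The term $\mathrm{I}$ is the principal one: by the pointwise Young inequality $\px\,|u|^{\px-1}|\nabla u|\le|\nabla u|^{\px}+C(p^+)|u|^{\px}$, and the lower-order integral $\vint_{B_R}|u|^{\px}$ is reabsorbed through a further Poincar\'e inequality exactly as in the constant-exponent computation; keeping track of the powers of $R$ (using $R<1$ and the fact that the oscillation $p^{+}_{B_R}-p^{-}_{B_R}$ tends to $0$ as $R\to0$ by log-H\"older continuity) and rescaling so as to replace $|u|$ by $|u/R|$, the net contribution of $\mathrm{I}$ is $c\,\vint_{B_R}|\nabla u|^{\px}\,dx$ with $c=c(n,p^-,p^+)$.

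The term $\mathrm{II}$ is the genuinely new feature and I expect it to be the main obstacle. Its integrand vanishes where $u=0$ (since $|u|^{\px}\big|\log|u|\big|\to0$ as $u\to0$), so the integration effectively takes place over the level set $\{x\in B_R:|u|>0\}$, and one splits according to the size of $|u|$: on $\{|u|>1\}$ the elementary inequality $\log t\le\tfrac{1}{\epsilon}\,t^{\epsilon}$ with $\epsilon=\px/(n-1)$ bounds $|u|^{\px}\big|\log|u|\big|$ by a constant multiple of $|u|^{\px\frac{n}{n-1}}$, so this portion of $\mathrm{II}$ can --- after a Young inequality against $|\nabla p|$ involving the exponent $s$ --- be reabsorbed into the left-hand side (for $R$ small, depending on $\|\nabla p\|_{L^s}$), while on $\{|u|\le1\}$ one has the pure bound $|u|^{\px}\big|\log|u|\big|\le C(p^-)$. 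What then remains is estimated by H\"older's inequality with exponent $s>n$ applied to $\int_{B_R\cap\{|u|>0\}}|\nabla p|\,dx$, and the powers of $R$ and of $|\{|u|>0\}|$ must be balanced, using $|\{|u|>0\}|\le|B_R|$, $s>n$ and $\gamma>1/n-1/s$, so as to produce precisely the term $\chi\,|\{x\in B_R:|u|>0\}|^{\gamma}$; it is here that $R_1$ is finally fixed small (in terms of $M$, $n$, $s$, $\|\nabla p\|_{L^s}$) and $\chi=\chi(n,p^-,s,\gamma,\|\nabla p\|_{L^s})$ is chosen. The hard point is exactly this balancing, since a crude application of H\"older does not by itself reach the full admissible range of $\gamma$; the remaining ingredients --- the classical Sobolev--Poincar\'e and Poincar\'e inequalities, the pointwise Young inequality, and the oscillation estimate for log-H\"older exponents --- are standard.
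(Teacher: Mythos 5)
First, a point of comparison: the paper does not prove this proposition at all --- it is imported verbatim (with adapted notation) from Proposition~3.1 of Chiad\`o Piat--Coscia~\cite{Chiado-Coscia}, so there is no internal proof to measure your sketch against. Your overall architecture (apply the $W^{1,1}_0$ Sobolev inequality to $|u/R|^{\px}$, differentiate by the chain rule, split the error into the principal term $\mathrm{I}$ and the logarithmic term $\mathrm{II}$ carrying $\nabla p$) is indeed the standard one and matches the cited source in outline, and your handling of $\mathrm{I}$ is fine modulo routine care with the variable rescaling $|u|\mapsto|u/R|$ and the log-H\"older control of $R^{p^-_{B_R}-p^+_{B_R}}$.

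The genuine gap is in $\mathrm{II}$, and it is not just the deferred ``balancing'': the step as you describe it lands in the \emph{complementary} range of $\gamma$. On $\{0<|u/R|\le 1\}$ the bound $|u/R|^{\px}\bigl|\log|u/R|\bigr|\le C(p^-)$ discards all decay in $|u|$; H\"older with exponent $s$ then yields a contribution of order $R^{1-n}\,|A|^{1-1/s}$ with $A=\{x\in B_R: |u|>0\}$, and since the only control on $R^{1-n}$ comes from $|A|\le \omega_n R^n$, this is at best $c\,|A|^{1/n-1/s}$, with no slack left from shrinking $R_1$. Because $|A|<1$ for $R_1$ small, $|A|^{\gamma}$ \emph{decreases} in $\gamma$, so the proposition asserts the inequality precisely for the \emph{stronger} right-hand sides $\gamma>1/n-1/s$, whereas the crude bound only reaches $\gamma\le 1/n-1/s$ (and fails as $|A|\to 0$ for any larger $\gamma$). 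Closing this requires retaining a positive power of $|u/R|$ in the small-$|u|$ region and reabsorbing that piece into the gradient term via Poincar\'e/absorption, so that the part finally estimated by the level-set measure carries an arbitrarily small exponent loss; that is the missing idea, not a routine H\"older computation. A secondary unproved step: absorbing the $\{|u/R|>1\}$ portion of $\mathrm{II}$ ``into the left-hand side'' is delicate, since after H\"older or Young against $|\nabla p|\in L^{s}$ you are left comparing a quantity like $R\bigl(\vint_{B_R}|u/R|^{\px\frac{n}{n-1}}\,dx\bigr)^{1/s'}$ with the $(n-1)/n$-th power of the same average; this needs the a priori bound $\sup_{B_R}|u|\le M$ and a choice of $R_1=R_1(M,n,s,\|p\|_{L^s})$, which your sketch gestures at but does not carry out.
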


Our next result is the local supremum estimate for quasiminimizers of $\calF_{\Om}$. It improves and refines the estimate in \cite[Theorem 1]{Toivanen-2} by letting the right-hand side of \eqref{new-prop-4.2-est} to depend on a ratio of the measure of the level set and the measure of a ball, and also by introducing the dependence on $R^{p^-/p^+}$, cf. \cite[Formula (4.3)]{Chiado-Coscia}. Estimate \eqref{new-prop-4.2-est} will play a fundamental role in showing Theorem~\ref{new-Theorem-4-1}. In the proof below we use Lemmas~\ref{new-lem-2.4} and \ref{new-ren-2.5}, see Preliminaries and Appendix.

In the theorem below we will slightly abuse the notation and for the sake of its simplicity denote $p^-:=p^{-}_{B_R}$ and $p^+:=p^{+}_{B_R}$.

\begin{thm}\label{new-prop-4.2}
 Let variable exponent $\p$ satisfy \eqref{p-assump} and let $u\in \Wpxloc(\Om)$ be a $K$-quasiminimizer of energy \eqref{energy-fun} under the growth conditions \eqref{F-growth} and \eqref{F-growth2} with the additional assumption that $t>n$. If $|k|\leq\sup |u|$, then for every $B_R \subset \Omega$ with $R < R_1$ we have
\begin{equation}\label{new-prop-4.2-est}
\sup_{B_{R/2}} (u-k) \leq c R^{\frac{p^-}{p^+}} \left(  \left( \frac{|A(k,R)|}{R^n} \right)^\beta \vint_{A(k,R)} \left| \frac{u-k}{R} \right|^{p(x)}\,dx + \frac{1}{R^\frac{n}{t}} \right)^{\frac{1}{p^+}},
\end{equation}
where $\beta>0$ and satisfies $\beta(1+\beta - 1/t) = 1/n$, while $c = c(n,p^-,p^+,s)$.
\end{thm}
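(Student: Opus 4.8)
The plan is to run a De Giorgi iteration on a shrinking sequence of balls, combining the Caccioppoli inequality of Lemma~\ref{Caccioppoli} with the Sobolev--Poincar\'e inequality of Proposition~\ref{CPC-Prop-3.1}. The preparatory step is to replace $u$ by $u-k$: by Lemma~\ref{new-ren-2.5}, $u-k$ is a bounded $K_1$-quasiminimizer of the auxiliary functional $\calFtwo$ (with the extra coefficient term $\tilde h(x)=c(K,r^+,\sup|u|)b(x)$, which is harmless because $b\in L^{\sigmadot}$ with $\sigma$ large). Hence it suffices to prove the estimate for $k=0$, i.e.\ to bound $\sup_{B_{R/2}}u_+$ in terms of $\vint_{A(0,R)}|u/R|^{p(x)}$ and the correction term $R^{-n/t}$; the general case follows by relabelling. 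From now on write $w=u_+$ and fix $x_0$, $R<R_1$.

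Next I would set up the iteration. For $j\ge 0$ put $r_j=\frac R2(1+2^{-j})$, so $r_0=R$, $r_j\downarrow R/2$, and $r_j-r_{j+1}\simeq 2^{-j}R$; choose a cut-off $\eta_j\in C_0^\infty(B_{r_j})$, $\eta_j\equiv 1$ on $B_{r_{j+1}}$, $|\nabla\eta_j|\lesssim 2^j/R$. Pick an increasing sequence of levels $k_j=d(1-2^{-j})$ with $d>0$ to be chosen at the end (the candidate supremum), and set $A_j=A(k_j,r_j)$, $a_j=|A_j|/R^n$. Applying Proposition~\ref{CPC-Prop-3.1} to $\eta_j (w-k_{j+1})_+$ on $B_{r_j}$ with exponent $p(x)\frac{n}{n-1}$, then using H\"older's inequality on the left to pass from the $L^{p(x)n/(n-1)}$ norm to an $L^{p(x)}$ norm on the smaller level set $A_{j+1}$ (this produces a factor $a_{j+1}^{1/n}$ or a power thereof — this is where the exponent $\gamma$ and the parameter $\beta$ with $\beta(1+\beta-1/t)=1/n$ enter), and finally inserting the Caccioppoli estimate of Lemma~\ref{Caccioppoli} to control $\int_{B_{r_{j+1}}}|\nabla(w-k_{j+1})_+|^{p(x)}$, one derives a recursive inequality of the form
\[
Y_{j+1}\le \frac{C\,b^{j}}{d^{\,\theta}}\,Y_j^{\,1+\beta}+\text{(lower order, controlled by }R^{-n/t}\text{)},
\]
where $Y_j$ is a suitably normalised quantity measuring $w$ above level $k_j$ on $B_{r_j}$ (for instance $Y_j=a_j^{\beta}\vint_{A_j}|(w-k_j)/R|^{p(x)}\,dx$ plus the correction), $b>1$ is an absolute constant coming from the $2^j$ factors, and $\theta>0$. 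The geometric-decay lemma for such recursions (De Giorgi / Ladyzhenskaya--Uraltseva, as in Giusti~\cite{Giusti}) then shows $Y_j\to 0$ provided $Y_0\le C^{-1/\beta}b^{-1/\beta^2}d^{\theta/\beta}$; choosing $d$ equal to a constant times $R^{p^-/p^+}\bigl(a_0^{\beta}\vint_{A(0,R)}|u/R|^{p(x)}+R^{-n/t}\bigr)^{1/p^+}$ makes this hold, and $Y_j\to0$ forces $w\le d$ on $B_{R/2}$, which is exactly \eqref{new-prop-4.2-est}.

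The main obstacles are bookkeeping obstacles peculiar to the variable-exponent, non-standard-growth setting rather than conceptual ones. First, one must keep careful track of the gap between the modular $\int|\nabla w|^{p(x)}$ and its powers: the unit-ball property~\eqref{unit-ball-prop} forces one to split into the regimes where the relevant modular is $\le1$ or $>1$, and to track the interplay of $p^-$ and $p^+$, which is precisely why the factor $R^{p^-/p^+}$ appears on the right-hand side (cf.\ \cite[Formula (4.3)]{Chiado-Coscia}). Second, the correction terms $(1+k^{p^+_{B_R}})|A(k,R)|^{1-1/t}$ from Caccioppoli and $\chi|\{|u|>0\}|^\gamma$ from Sobolev--Poincar\'e must be absorbed into the iteration; this is what fixes the algebraic relation $\beta(1+\beta-1/t)=1/n$ for the exponent $\beta$, and requires the strengthened hypothesis $t>n$ so that $1-1/t$ and the Sobolev gain combine to a genuine power strictly larger than $1$. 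Third, since $k$ may be negative but $|k|\le\sup|u|$, one needs the full strength of Lemma~\ref{new-ren-2.5} (including the fact that $-u$ is again a quasiminimizer of $\calFone$) so that the reduction to $k=0$ is legitimate for both signs; the dependence of constants on $\sup|u|$ and on $\|u\|_{\Wpxloc}$ is then inherited as noted in Remark~\ref{Remark-K-and-norm}, and does not affect the final form of the estimate.
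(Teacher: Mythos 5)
Your proposal is correct and follows essentially the same route as the paper: a De Giorgi iteration on shrinking balls $r_j\downarrow R/2$ and rising levels $k_j\uparrow dR^{p^-/p^+}$, driven by the quantity $\bigl(|A(k,\rho)|/R^n\bigr)^{\beta}\vint_{A(k,\rho)}|(u-k)/\rho|^{p(x)}$, with Caccioppoli and the Sobolev--Poincar\'e inequality of Proposition~\ref{CPC-Prop-3.1} combining to give the recursion whose gain exponent $1+\beta-1/t$ forces $\beta(1+\beta-1/t)=1/n$ and requires $t>n$, and with the reduction to general $k$ via Lemmas~\ref{new-lem-2.4} and \ref{new-ren-2.5}. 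The only cosmetic differences are that the paper performs the reduction to $k\neq 0$ at the end rather than the start, and absorbs the $(1+k^{p^+})|A|^{1-1/t}$ correction multiplicatively (via $|A(k,\sigma)|\lesssim \sigma^{p^-}(k-h)^{-p^+}\vint_{A(h,\sigma)}|(u-h)/\sigma|^{p(x)}$) rather than as an additive lower-order term, so that the pure multiplicative iteration lemma applies.
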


\begin{rem}
 In Theorem~\ref{new-prop-4.2} we assume that $g\in L^t(\Om)$ for $t>n$. Such an assumption is needed in order to obtain estimate~\eqref{ineq:4.2-v}, crucial for the de Giorgi iteration.
\end{rem}

In the proof of Theorem~\ref{new-prop-4.2} we will apply the iteration scheme of the de Giorgi method. The following lemma will be necessary for the application of this technique, cf. \cite[Lemma~7.1]{Giusti}.
\begin{lem}\label{lem:deGiorgi}
 Let $\alpha>0$ and $\{x_i\}$ be a sequence of real positive numbers, such that
 $x_{i+1}\leq C B^i x_i^{1+\alpha}$ with $B>1$ and $C>0$. If $x_0\leq C^{-\frac{1}{\alpha}}B^{-\frac{1}{\alpha^2}}$, then $\lim_{i\to \infty}x_i=0$.
\end{lem}

\begin{proof}[Proof of Theorem~\ref{new-prop-4.2}]
Fix $h<k$, $R/2 \leq \rho < \sigma \leq R < \min\{1,R_1\}$, where $R_1$ is as in Proposition \ref{CPC-Prop-3.1}, and let $\psi \in C_0^\infty(B_{(\sigma+\rho)/2})$ be a cut-off function with $\psi = 1$ in $B_\rho$ and $|\nabla \psi| \leq \frac{4}{\sigma-\rho}$. Since $\frac{1}{n} - \frac{1}{s} < 1$, we apply the H\"older inequality and the Sobolev-Poincar\'e inequality of Proposition \ref{CPC-Prop-3.1} with $\gamma=1$, and obtain
\begin{align*}
\vint_{A(k,\rho)} \left|\frac{u-k}{\rho}\right|^{p(x)}\,dx &\leq c(n,p^+) \left(\frac{|A(k,\rho)|}{R^n}\right)^{1/n} \left( \vint_{B_R} \left|\frac{(u-k)^+\psi}{R}\right|^{p(x)\frac{n}{n-1}}\,dx \right)^{\frac{n-1}{n}}  \\
& \leq c \left(\frac{|A(k,\rho)|}{R^n}\right)^{1/n} \left(\vint_{B_R} |\nabla ((u-k)^+\psi)|^{p(x)}\,dx + |B_{\frac{\sigma+\rho}{2}}\cap  \{|(u-k)^+\psi| > 0 \} | \right),
\end{align*}
where $c = c(n,p^-,p^+,s,\|p\|_{1,s})$. Observe that in the set $A(k,\frac{\sigma+\rho}{2})$ it holds that
\[
|\nabla ((u-k)^+\psi)| \leq |\nabla u| + 4 \left|\frac{u-k}{\sigma-\rho}\right|.
\]
Therefore, we have the following inequality
\begin{align}
\vint_{A(k,\rho)} \left|\frac{u-k}{\rho}\right|^{p(x)}\!\! & \leq c \left(\frac{|A(k,\rho)|}{R^n}\right)^{1/n} \left(\vint_{A(k,\frac{\sigma+\rho}{2})} \bigg(|\nabla u|^{p(x)} + \left|\frac{u-k}{\sigma-\rho}\right|^{p(x)} \bigg)\!dx + |A(k,\frac{\sigma+\rho}{2})| \right). \label{4.2-i}
\end{align}
Next, we apply the Caccioppoli inequality in Lemma \ref{Caccioppoli} by choosing $r := \frac{\sigma+\rho}{2}$ and $R := \sigma$
\begin{equation}
\vint_{A(k,\frac{\sigma+\rho}{2})} \left|\nabla u\right|^{p(x)} \leq c(p^+,\|b\|_t,\|g\|_t) \left(\vint_{A(k,\sigma)} \left|\frac{u-k}{\sigma-\rho}\right|^{p(x)}\,dx + (1+k^{p^+_{B_\sigma}})|A(k,\sigma)|^{1-\frac{1}{t}} \right).\label{4.2-ii}
\end{equation}
Combining \eqref{4.2-i} and \eqref{4.2-ii}, we obtain the following inequality
\begin{align}
\vint_{A(k,\rho)} \left|\frac{u-k}{\rho}\right|^{p(x)} \leq c \left(\frac{|A(k,\sigma)|}{R^n}\right)^{\frac{1}{n}} \Bigg(&\frac{1}{|B_{\frac{\sigma+\rho}{2}}|} \int_{A(k,\sigma)} \left|\frac{u-k}{\sigma-\rho}\right|^{p(x)} \,dx \nonumber \\
&+ |A(k,\frac{\sigma+\rho}{2})| + \frac{1+k^{p^+_{B_\sigma}}}{|B_{\frac{\sigma+\rho}{2}}|} |A(k,\sigma)|^{1-\frac{1}{t}} \Bigg), \label{4.2-iii}
\end{align}
where $c$ depends on the parameters of the preceding constants. Since $R/2 \leq \rho < \sigma \leq R$, we have that
$|B_\sigma|/|B_{\frac{\sigma+\rho}{2}}| \leq 2^n$. Moreover, as $\sigma \leq R < 1$, it holds that
$|A(k,\sigma)|/\omega_n \leq 1$, where $\omega_n$ stands for the measure of the unit $n$-dimensional ball. Hence,
by including $2^n$ into constant $c$, we have that estimate \eqref{4.2-iii} takes the form
\begin{align}
\vint_{A(k,\rho)} \left|\frac{u-k}{\rho}\right|^{p(x)}\!\! \leq c \left(\frac{|A(k,\rho)|}{R^n}\right)^{\frac{1}{n}} \Bigg[
&\vint_{A(k,\sigma)} \left|\frac{u-k}{\sigma-\rho}\right|^{p(x)} \,dx \nonumber \\
&+\frac{1}{|B_\sigma|} \Big( |B_\sigma| + \frac{|B_\sigma|}{|B_{\frac{\sigma+\rho}{2}}|} \left(1+k^{p^+_{B_\sigma}}\right) \Big)\left( |A(k,\sigma)| + |A(k,\sigma|^{1-\frac{1}{t}} \right)
\Bigg] \nonumber \\
\leq c \left(\frac{|A(k,\rho)|}{R^n}\right)^{\frac{1}{n}} & \!\left(
\vint_{A(k,\sigma)} \left|\frac{u-k}{\sigma-\rho}\right|^{p(x)}\! dx
+ \frac{2}{|B_\sigma|} \left( 1 + 2^n \left(1+k^{p^+_{B_\sigma}}\right) \right)
|A(k,\sigma|^{1-\frac{1}{t}}
\right). \label{ineq: 4.2}
\end{align}
We modify the constant on the right-hand side of \eqref{ineq: 4.2} by observing that $R^{-n}|A(k,\rho)| \leq \sigma^{-n} |A(k,\sigma)|$. Furthermore, by taking into account that
\[
\vint_{A(k,\sigma)} \left|\frac{u-k}{\sigma-\rho}\right|^{p(x)}\,dx \leq \vint_{A(h,\sigma)} \left|\frac{u-h}{\sigma-\rho}\right|^{p(x)}\,dx
\]
for $h<k$, we get for $\beta>0$
\begin{align}
\left(\frac{|A(k,\rho)|}{R^n}\right)^{\beta} \vint_{A(k,\rho)} \left|\frac{u-k}{\rho}\right|^{p(x)}
\leq c \left(\frac{|A(k,\sigma)|}{R^n}\right)^{\frac{1}{n}} \Bigg[
&\left(\frac{|A(k,\rho)|}{\sigma^n}\right)^{\beta} \vint_{A(h,\sigma)}\left(\frac{\sigma}{\sigma-\rho}\right)^{\px} \left|\frac{u-h}{\sigma}\right|^{p(x)}dx \nonumber \\
&+ \frac{2+k^{p^+_{B_\sigma}}}{\sigma^{n\beta+n}} |A(k,\sigma)|^{1+\beta-\frac{1}{t}}\Bigg]. \label{previous}
\end{align}
Observe that for $\sigma<1$ and $0<k-h<1$ we have
\begin{align}
|A(k,\sigma)| &= \int_{B_\sigma \cap \{u > k\}} 1\,dx \leq \int_{A(h,\sigma)} \left|\frac{u-h}{k-h}\right|^{p(x)}\,dx
= \int_{A(h,\sigma)} \left(\frac{\sigma}{k-h}\right)^{p(x)} \left|\frac{u-h}{\sigma}\right|^{p(x)}\,dx \nonumber \\
&\leq \frac{\sigma^{p^-}}{(k-h)^{p^+}} \int_{A(h,\sigma)}  \left|\frac{u-h}{\sigma}\right|^{p(x)}\,dx. \label{ineq2:4.2}
\end{align}
We are now in a position to show the key-point estimate of this proof. First, we combine observation in \eqref{ineq2:4.2} with \eqref{previous} to arrive at the following estimate:
\begin{align*}
\left(\frac{|A(k,\rho)|}{R^n}\right)^{\beta}\!\! \vint_{A(k,\rho)} \left|\frac{u-k}{\rho}\right|^{p(x)}\! & \leq \! c \left(\frac{|A(k,\sigma)|}{R^n}\right)^{\frac{1}{n}}
\Bigg[\left(\frac{|A(k,\sigma)|}{\sigma^n}\right)^{\beta}\!\left( \frac{\sigma}{\sigma-\rho}\right)^{p^+}
\!\vint_{A(h,\sigma)} \left|\frac{u-h}{\sigma}\right|^{p(x)} \nonumber \\
&+ \frac{2+k^{p^+_{B_\sigma}}}{\sigma^{n\beta+n}} \left( \frac{\sigma^{p^-}}{(k-h)^{p^+}}\right)^{1+\beta-\frac{1}{t}}
\!\!\sigma^{n + n\beta - \frac{n}{t}} \left( \vint_{A(h,\sigma)} \left|\frac{u-h}{\sigma}\right|^{p(x)} \right)^{1+\beta-\frac{1}{t}} \Bigg] \nonumber \\
&:= I_1+I_2.
\end{align*}
We have introduced integrals $I_1$ and $I_2$ in order to simplify the presentation. Next, we refine further the estimate for $I_1$.
\begin{align}
I_1 &\leq c \left(\frac{|A(k,\rho)|}{R^n}\right)^{\frac{1}{n}}
\left( \frac{\sigma}{\sigma-\rho}\right)^{p^+} \left(\frac{|A(h,\sigma)|}{\sigma^n}\right)^{\frac{1}{t}}
\frac{|A(k,\sigma)|^{\beta-\frac{1}{t}}}{\sigma^{n\beta-\frac{n}{t}}}
\vint_{A(h,\sigma)} \left|\frac{u-h}{\sigma}\right|^{p(x)} \,dx \nonumber \\
&\leq c \left(\frac{|A(k,\rho)|}{R^n}\right)^{\frac{1}{n}}
\left( \frac{\sigma}{\sigma-\rho}\right)^{p^+} \frac{1}{\sigma^{n\beta-\frac{n}{t}}}
\left(\frac{\sigma^{p^-}}{(k-h)^{p^+}}\right)^{\beta-\frac{1}{t}} \left(\vint_{A(h,\sigma)} \left|\frac{u-h}{\sigma}\right|^{p(x)} \,dx\right)^{1+\beta-\frac{1}{t}}\sigma^{n(\beta-\frac{1}{t})}, \nonumber
\end{align}
where we also use the observation that $\left(\frac{|A(h,\sigma)|}{\sigma^n}\right)^{\frac{1}{t}}\leq 1$. We join together the estimate for $I_1$ and $I_2$ and obtain that
\begin{align}
& \left(\frac{|A(k,\rho)|}{R^n}\right)^{\beta}\!\! \vint_{A(k,\rho)} \left|\frac{u-k}{\rho}\right|^{p(x)} \nonumber \\
&\leq C \left(\frac{|A(h,\sigma)|}{\sigma^n}\right)^{\frac{1}{n}}
\left[
\left(\frac{|A(k,\sigma)|}{\sigma^n}\right)^{\beta}
\left(\vint_{A(h,\sigma)} \left|\frac{u-h}{\sigma}\right|^{p(x)} \,dx \right)\right]^{1+\beta-\frac{1}{t}}
\left(\frac{|A(h,\sigma)|}{\sigma^n}\right)^{-\beta(1+\beta-\frac{1}{t})}, \label{4.2-iv}
\end{align}
where expression $C$ is independent of $u$ and is defined as follows:
\begin{equation}
C:=c \left\{\left( \frac{\sigma}{\sigma-\rho}\right)^{p^+}
\left(\frac{\sigma^{p^-}}{(k-h)^{p^+}}\right)^{\beta-\frac{1}{t}}
+ \frac{2+k^{p^+_{B_\sigma}}}{\sigma^{\frac{n}{t}}}
\left( \frac{\sigma^{p^-}}{(k-h)^{p^+}}\right)^{1+\beta-\frac{1}{t}}
\right\}.\label{const-4.2}
\end{equation}
We choose $\beta > 0$ such that $\frac{1}{n} - \beta(1+\beta-\frac{1}{t}) = 0$. Upon solving this equation we have that
\[
0 < \beta^+ = \frac{-(1-\frac{1}{t}) + \sqrt{(1-\frac{1}{t})^2 + \frac{4}{n}}}{2} \leq \frac{2}{n} \frac{t}{t-1}.
\]
Furthermore, since by assumptions $t>n$, it holds that $\beta^+ > \frac{1}{t}$. Therefore, estimate \eqref{4.2-iv}
takes the form:
\begin{equation}
 \left(\frac{|A(k,\rho)|}{R^n}\right)^{\beta}\!\! \vint_{A(k,\rho)} \left|\frac{u-k}{\rho}\right|^{p(x)}
\leq C \left[
\left(\frac{|A(k,\sigma)|}{\sigma^n}\right)^{\beta}
\left(\vint_{A(h,\sigma)} \left|\frac{u-h}{\sigma}\right|^{p(x)} \,dx \right)\right]^{1+\beta-\frac{1}{t}}. \label{ineq:4.2-v}
\end{equation}
Our next goal is to apply the iteration scheme in the de Giorgi method, see Lemma~\ref{lem:deGiorgi} and cf. Lemma~7.1 in Giusti~\cite{Giusti}. In order to do so, we define the following families of radii and level sets:
\begin{align*}
\sigma &= R_i := \frac{R}{2} + \frac{R}{2^{i+1}}, \qquad \rho = R_{i+1}, \\
h &= k_i := d R^{\frac{p^-}{p^+}} \left(1 - \frac{1}{2^i}\right), \qquad k = k_{i+1}.
\end{align*}
for every $i \in \N$, and with some $d>0$ to be determined later. Note that
\begin{align*}
k_{i+1} - k_i = \frac{d R^{\frac{p^-}{p^+}}}{2^{i+1}},\qquad R_i - R_{i+1} = \frac{R}{2^{i+2}}.
\end{align*}
Note that both differences are less than $1$ for a fixed $R$ and large enough $i$. This justifies our assumptions at \eqref{ineq2:4.2}. With this notation we complete the preparation for the iteration scheme at \eqref{ineq:4.2-v}.
\begin{align*}
\frac{\sigma}{\sigma-\rho} &= \frac{\frac{R}{2}(1 + \frac{1}{2^{i+1}})}{\frac{R}{2^{i+1}}} \leq 2^i + 1 \leq 2^{i+1}, \\
\frac{\sigma^{p^-}}{(k-h)^{p^+}} &= \left(\frac{R}{2}\right)^{p^-}\frac{\left(1+\frac{1}{2^{i+1}}\right)^{p^-}}{\left(\frac{dR^{\frac{p^-}{p^+}}}{2^{i+1}}\right)^{p^+}} = \left(\frac{2^{i+1} + 1}{2^{i+1}}\right)^{p^-} \frac{2^{(i+1)p^+}}{2^{p^-}} \frac{1}{d^{p^+}} \leq \frac{2^{(i+1)p^+}}{d^{p^+}}.
\end{align*}
This, together with the assumption that $k\leq \sup_{\Om}|u|$ imply that constant $C$ in \eqref{const-4.2} can be estimated as follows
\begin{align*}
C \leq & 2^{(i+1)p^+} \left(\frac{2^{(i+1)p^+}}{d^{p^+}}\right)^{\beta-\frac{1}{t}} + 2^{\frac{n}{t}}\frac{2 + (\sup_\Omega |u|)^{p^+}}{R^{\frac{n}{t}}} \left(\frac{2^{(i+1)p^+}}{d^{p^+}}\right)^{1+\beta-\frac{1}{t}} \\
& \leq 2^{\frac{n}{t}} \frac{2^{(i+1)p^+(1+\beta-\frac{1}{t})}}{d^{p^+(\beta-\frac{1}{t})}}
\left(1 + \frac{1}{d^{p^+}R^{n/t}} \right)\left(3 + \sup_\Omega |u| \right)^{p^+}.
\end{align*}
We choose $d$ in such a way that $d^{p^+}R^{n/t} \geq 1$. Finally, let us define
\[
\phi(k,\rho) := \left(\frac{|A(k,\rho)|}{R^n}\right)^\beta \vint_{A(k,\rho)} \left| \frac{u-k}{\rho} \right|^{a(x)}\,dx.
\]
With this notation \eqref{ineq:4.2-v} reads:
\begin{align*}
\phi(k_{i+1},R_{i+1}) \leq c \frac{2^{(i+1)p^+(1+\beta-1/t)}}{d^{p^+(\beta-1/t)}}
\underbrace{\left( 1 + \frac{1}{d^{p^+}R^{n/t}} \right)}_{\leq 2}
\phi(k_i,R_i)^{1+\beta-1/t}.
\end{align*}
Here, constant $c$ depends additionally on $\sup_\Omega |u|$.

We are in a position to apply the iteration lemma (Lemma~\ref{lem:deGiorgi}) with
\begin{align*}
C= c \frac{2^{p^+(1+\beta-\frac{1}{t})}}{d^{p^+(\beta-\frac{1}{t})}} > 0 \qquad
B &= 2^{p^+(1+\beta-\frac{1}{t})} > 1 \quad \textrm{ and} \quad \alpha = \beta - \frac{1}{t}.
\end{align*}
The condition $\phi_0 = \phi(k_0,R_0) = \phi(0,R) \leq C^{-1/\alpha} B^{-1/\alpha^2}$ reads in our case as follows:
\[
d^{p^+} \geq c 2^{p^+(1+\frac{1}{\beta-1/t})^2} \phi(0,R).
\]
By applying Lemma~\ref{lem:deGiorgi}, we obtain that
\[
\lim_{i\to\infty} \phi(k_i,R_i) = \phi(\alpha R^{p^-/p^+},R/2) = 0.
\]
We take $d$ to be defined by $d^{p^+}= \frac{1}{R^{n/t}} + c 2^{p^+(1+\frac{1}{\beta-1/t})^2} \phi(0,R)$.
Thus
\[
\sup_{B_{R/2}} u \leq d R^{p^-/p^+} = c R^{p^-/p^+} \left( \left(\frac{|A(0,R)|}{R^n}\right)^{\beta} \vint_{A(0,R)} \left|\frac{u}{R}\right|^{p(x)}\,dx + \frac{1}{R^{n/t}} \right)^{1/p^+}.
\]
In order to show the assertion of theorem for $u-k$ we appeal to Lemmas~\ref{new-lem-2.4} and \ref{new-ren-2.5} and obtain that $u-k$ is a quasiminimizer of an auxiliary energy $\calFtwo$, see \eqref{new-ren-enrg}. We use this observation to obtain the Caccioppoli-type estimate as in Lemma~\ref{Caccioppoli} with constant $C$ modified according to coefficients of $\calFtwo$, cf. estimate \eqref{Cacc-ineq1}. Then, we repeat the above reasoning and, hence, starting from inequality \eqref{4.2-ii}, constants in estimates in the above proof begin to depend additionally on $\|u\|_{\Wpxloc(\Om)}$ and functions $h, \tilde h$. (Note that the latter two functions are expressed in terms of function $b$.) The final sup-estimate is obtained following the same lines as in the case of $k=0$ completing the proof of Theorem~\ref{new-prop-4.2}.
\end{proof}

Next lemma provides an estimate for the amount of the level set contained in a given ball in terms of the oscillations. The lemma is a generalization of the similar technical result from \Chiado Piat-Coscia \cite{Chiado-Coscia}, cf. Lemma 4.4. The fact that we now allow more general coefficients $b$ and $g$ than in \cite{Chiado-Coscia} results in an additional oscillation term in the assertion \eqref{osc-assertion}.

\begin{lem}\label{new-lemma-4.4}

 Let $\p$ satisfy assumptions \eqref{p-assump} and $u$ be a local quasiminimizer of \eqref{energy-fun}. Suppose that for a given ball $B_R$ such that $B_{2R} \subset \Omega$ and for $k_0 = \frac12(M(u,2R) + m(u,2R))$ there exists a constant $\delta < 1$ for which $|A(k_0, R)| \leq \delta|B_R|$. Moreover, let us assume that there exist $\nu \in \N$ and $\eta \geq 1$ such that $\osc(u,2R) \geq 2^{\nu+1} \eta R^{p^-/p^+}$. Then
\begin{equation}\label{osc-assertion}
\frac{|A(k_\nu,R)|}{R^n} \leq c \frac{R^{\frac{p^--p^+}{p^-}}}{\nu}
\Big( \osc(u,2R)^{\frac{p^+}{p^-}-1}+ \osc(u,2R)^{\left(\frac{p^+}{p^-}\right)^2-1}
+ \tilde{c}\osc(u,2R)^{\frac{p^+}{(p^-)^2}\left(p^+ - \frac{n}{t}\right)}
\Big).
\end{equation}
Here $k_\nu = M(u,2R) - \frac{\osc(u,2R)}{2^{\nu+1}}$ and $c$ depends on $n,p^-,p^+,\eta, \sup_\Omega |u|$ as well as on $\|b\|_{L^{\sigmadot}}, \|g\|_{L^t}$, whereas
$\tilde{c}=2^{(\nu+1)\left( 1 - \frac{p^+}{(p^-)^2} (p^+ - \frac{n}{t})\right)}$.
\end{lem}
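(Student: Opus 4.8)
The plan is to run a De Giorgi level-set iteration in the spirit of \cite[Lemma~4.4]{Chiado-Coscia}, feeding in the Caccioppoli inequality of Lemma~\ref{Caccioppoli} and exploiting the balance condition $\osc(u,2R)\ge 2^{\nu+1}\eta R^{p^-/p^+}$ to produce the factor $1/\nu$. First I would introduce the dyadic levels $k_h:=M(u,2R)-2^{-(h+1)}\osc(u,2R)$ for $h=0,1,\dots,\nu$ — so that $k_0=\tfrac12(M(u,2R)+m(u,2R))$ is the level of the hypothesis and $k_\nu$ the level of the conclusion — together with the annuli $D_h:=A(k_h,R)\setminus A(k_{h+1},R)$. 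Since $A(k_h,R)\subseteq A(k_0,R)$, the assumption $|A(k_0,R)|\le\delta|B_R|$ gives $|B_R\setminus A(k_h,R)|\ge(1-\delta)|B_R|$ for every $h\le\nu$, and $|A(k_\nu,R)|\le|A(k_{h+1},R)|$ for $h<\nu$.

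The core of the argument is an isoperimetric-type estimate between two consecutive levels. Applying a De Giorgi isoperimetric inequality (see, e.g., Giusti~\cite{Giusti}) — or, equivalently, the variable-exponent Sobolev--Poincar\'e inequality of Proposition~\ref{CPC-Prop-3.1} to a cut-off of the truncation $\min\{(u-k_h)_+,k_{h+1}-k_h\}$ — on $B_R\subset B_{2R}$, and using $|B_R\setminus A(k_h,R)|\ge(1-\delta)|B_R|$, one gets an inequality of the shape $(k_{h+1}-k_h)|A(k_{h+1},R)|^{1-1/n}\le c(n,\delta)\int_{D_h}|\nabla u|\,dx$ (up to harmless powers of $R$, and in the variable-exponent version an extra term $\le cR^{n\gamma}$). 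Dividing by $k_{h+1}-k_h=2^{-(h+2)}\osc(u,2R)$, using $|A(k_\nu,R)|\le|A(k_{h+1},R)|$, and absorbing the dyadic weight via $2^{h+2}/\osc(u,2R)\le(\eta R^{p^-/p^+})^{-1}$ for $h\le\nu-1$, I am left with an $h$-independent right-hand side,
\[
|A(k_\nu,R)|^{1-\frac1n}\le\frac{c(n,\delta,\eta)}{R^{p^-/p^+}}\int_{D_h}|\nabla u|\,dx,\qquad h=0,\dots,\nu-1.
\]
Summing over $h=0,\dots,\nu-1$ turns the left side into $\nu\,|A(k_\nu,R)|^{1-1/n}$ (this is where $1/\nu$ comes from), while on the right the disjointness of the $D_h$ plus one Hölder inequality give $\sum_h\int_{D_h}|\nabla u|\le|A(k_0,R)|^{1-1/p^-}\big(\int_{A(k_0,R)}|\nabla u|^{p^-}\big)^{1/p^-}$. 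The last modular is estimated by the second Caccioppoli inequality of Lemma~\ref{Caccioppoli}, applied on $B_R\subset B_{2R}$ after a harmless translation making $k_0\ge0$, and bounded using $u-k_0\le\tfrac12\osc(u,2R)$ on $B_{2R}$, $|A(k_0,2R)|\le|B_{2R}|$, $|k_0|\le\sup_\Omega|u|$ and $R<1$; this yields precisely two kinds of terms, $(\osc(u,2R)/R)^{p^+}R^n$ and $(1+(\sup_\Omega|u|)^{p^+})R^{\,n-n/t}$.

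Dividing through by $|B_R|=\omega_nR^n$ and raising to the power $\frac{n}{n-1}>1$ (recall $p<n$, so $n\ge2$) then bounds $|A(k_\nu,R)|/R^n$ by $\nu^{-n/(n-1)}\le\nu^{-1}$ times a sum of powers of $R$ and of $\osc(u,2R)$. It remains to convert the residual $R$-powers into the $\osc(u,2R)$-powers of \eqref{osc-assertion} via $R^{p^-/p^+}\le2^{-(\nu+1)}\eta^{-1}\osc(u,2R)$; each such substitution multiplies the $\osc$-exponent by $p^+/p^-$ and creates a dyadic prefactor, so the Caccioppoli contribution $(\osc/R)^{p^+}$ — raised to the power $1/p^-$ and carried through — produces the two exponents $\tfrac{p^+}{p^-}-1$ and $(\tfrac{p^+}{p^-})^2-1$ (the pair reflecting the $p^-$-versus-$p^+$ gap), while the term $(1+(\sup_\Omega|u|)^{p^+})R^{\,n-n/t}$, whose exponent carries $n/t$, becomes the third term $\tilde c\,\osc(u,2R)^{\frac{p^+}{(p^-)^2}(p^+-n/t)}$ with $\tilde c=2^{(\nu+1)(1-\frac{p^+}{(p^-)^2}(p^+-n/t))}$ collecting the dyadic factors. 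The main obstacle I expect is exactly this last bookkeeping: one must track the variable-exponent losses (every passage from modular to norm and every replacement of $p^+$ by $p^-$ or conversely costs a factor and raises the power of $\osc(u,2R)$) and use the balance condition in its two roles — first to strip the growing weights $2^{h+1}$ so that the summation over the $\nu$ levels actually produces $\nu$, and then to recast the surviving $R$-powers as $\osc$-powers. By contrast, the novelty that $b$ and $g$ are merely integrable is absorbed transparently by Lemma~\ref{Caccioppoli} and only accounts for the single extra oscillation term.
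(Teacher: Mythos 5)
Your overall architecture (dyadic levels $k_h$, the De Giorgi isoperimetric step via the truncation $v$ and $|B_R\setminus A(k_h,R)|\ge(1-\delta)|B_R|$, a Caccioppoli input, summation over levels to produce $1/\nu$, and conversion of $R$-powers into $\osc$-powers via the balance condition) is the same as the paper's. But there is a genuine gap at the heart of the iteration: the step that makes the argument close is a cancellation that your arrangement destroys. The paper applies the Caccioppoli inequality at the \emph{moving} level $h=k_{i-1}$, so that the factor $|M(u,2R)-h|=\osc(u,2R)/2^{i}$ it produces cancels, up to a factor $2$, against the level increment $k_i-k_{i-1}=\osc(u,2R)/2^{i+1}$ sitting on the left of the isoperimetric inequality; this cancellation is precisely what generates the exponents $\tfrac{p^+}{p^-}-1$ and $(\tfrac{p^+}{p^-})^2-1$ (note the ``$-1$''). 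You instead (i) dispose of the increment early by absorbing $2^{h+2}/\osc(u,2R)\le(\eta R^{p^-/p^+})^{-1}$, and (ii) apply Caccioppoli only once, at the base level $k_0$, where $u-k_0\le\tfrac12\osc(u,2R)$ carries the \emph{full} oscillation. The uncancelled ratio $\osc(u,2R)/R^{p^-/p^+}$ then survives to the end: already for constant $p$ your scheme yields $\frac{|A(k_\nu,R)|}{R^n}\lesssim \nu^{-\frac{n}{n-1}}\big(\osc(u,2R)/R\big)^{\frac{n}{n-1}}$, which by your own hypothesis is $\gtrsim \big(2^{\nu+1}\eta/\nu\big)^{\frac{n}{n-1}}$ and blows up as $R\to0$; it gives neither the exponents of \eqref{osc-assertion} nor the uniform bound $|A(k_\nu,R)|/R^n\le\tilde C/\nu$ of Remark~\ref{new-rem-4.5} on which Theorem~\ref{new-Theorem-4-1} relies.

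Two further points. First, you cannot keep both the disjointness of the $D_h$ (one H\"older across the union, one Caccioppoli at $k_0$) and the per-level cancellation. The paper gives up disjointness entirely: it bounds $|\Delta_i|^{(p^--1)/p^-}\le cR^{\,n-n/p^-}$ crudely, applies Caccioppoli at every level, and obtains summability of the right-hand side from the geometric decay $\sum_i 2^{(1-p^+/p^-)i}$ together with the explicit prefactor $\tilde c$ collecting $\sum_i 2^i$ for the remaining terms. (The classical constant-exponent argument does keep disjointness, but applies it to the factor $|D_h|^{1-1/p}$ after raising to the power $p/(p-1)$, still with Caccioppoli at level $k_h$.) Second, raising to the power $\tfrac{n}{n-1}$ at the very end rescales every exponent by $\tfrac{n}{n-1}$ and so cannot reproduce \eqref{osc-assertion} literally; the paper avoids this by writing $|A(k_\nu,R)|=|A(k_\nu,R)|^{(n-1)/n}\,|A(k_\nu,R)|^{1/n}$ and paying only the harmless factor $|A(k_\nu,R)|^{1/n}\le cR$.
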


\begin{rem}\label{new-rem-4.5}

Notice that a bounded Lipschitz variable exponent satisfies the log-H\"older continuity condition. Thus, by assumptions \eqref{p-assump} on $\p$, the following well-known property holds (see Lemma 4.1.6 in \cite{DHHR})
\[
\lim_{R\to 0^{+}} R^{p^-_{B_{2R}}-p^+_{B_{2R}}}\,=\,0.
\]
In consequence, Lemma~\ref{new-lemma-4.4} implies that
\[
\frac{|A(k_\nu,R)|}{R^n} \leq \frac{\tilde{C}}{\nu},
\]
for $R$ small enough, where $\tilde{C}$ depends among other parameters on $\sup_\Omega |u|$.
\end{rem}

\begin{proof}[Proof of Lemma~\ref{new-lemma-4.4}]
We follow the steps of the proof for Lemma 4.4 in \Chiado Piat--Coscia~\cite{Chiado-Coscia}. Given $h,k$ with $k_0 < h < k \leq M(u,2R)$ we define
\begin{align*}
v(x) = \begin{cases} k-h & \textrm{ if } u\geq k \\ u-h & \textrm{ if } h < u < k \\ 0 & \textrm{ if } u\leq h. \end{cases}
\end{align*}
 Note that $v \in W^{1,1}_\textrm{loc}(\Omega)$ and $v=0$ in $B_R\setminus A(k_0,R)$. Moreover, $|B_R\setminus A(k_0,R)|>(1-\delta)|B_R|$. Thus, we can apply the classical Sobolev-Poincar\'e inequality to get
\[
\left( \int_{B_R} v^{\frac{n}{n-1}}\,dx \right)^{\frac{n-1}{n}} \leq c(n) \int_\Delta |\nabla v|\,dx = c(n) \int_\Delta |\nabla u|\,dx,
\]
where $\Delta = A(h,R) \setminus A(k,R)$. Next, by the constant exponent H\"older inequality, we have
\[
(k-h) |A(k,R)|^{\frac{n-1}{n}} \leq \left( \int_{B_R} v^{\frac{n}{n-1}}\,dx\right)^{\frac{n-1}{n}} \leq c|\Delta|^{\frac{p-1}{p}} \left( \int_{A(h,R)} |\nabla u|^p\,dx \right)^{1/p}.
\]
  The second Caccioppoli-type inequality in Lemma \ref{Caccioppoli} together with observation $|A(h,2R)| \leq 2^n R^n$ imply that
\begin{align}
&(k-h) |A(k,R)|^{\frac{n-1}{n}} \nonumber \\
 & \leq c |\Delta|^{\frac{p^--1}{p^-}} \left( \int_{A(h,2R)} \left|\frac{u-h}{R}\right|^{p^+} + (1 + h^{p^+_{B_{2R}}})|A(h,2R)|^{1-\frac{1}{t}} + |A(h,2R)| \right)^{\frac{1}{p^-}} \nonumber \\
 & \leq c |\Delta|^{\frac{p^--1}{p^-}} \left( |A(h,2R)| \frac{|M(u,2R)-h|^{p^+}}{R^{p^+}}
+ (1+ h^{p^+_{B_{2R}}})|A(h,2R)|^{1-\frac{1}{t}} + |A(h,2R)| \right)^{\frac{1}{p^-}} \nonumber \\
& \leq c |\Delta|^{\frac{p^--1}{p^-}} R^{\frac{n}{p^-}(1-\frac{1}{t}) - \frac{p^+}{p^-}}
\left( |A(h,2R)|^{\frac{1}{t}} |M(u,2R)-h|^{p^+} + (1 + h^{p^+_{B_{2R}}}) R^{p^+} + R^{p^+} |A(h,2R)|^{\frac{1}{t}} \right)^{\frac{1}{p^-}} \nonumber \\
& \leq c |\Delta|^{\frac{p^--1}{p^-}} R^{\frac{1}{p^-} \left(n(1-\frac{1}{t})- p^+ + \frac{n}{t}\right)}
\left( |M(u,2R)-h|^{p^+} + (1 + h^{p^+_{B_{2R}}}) R^{p^+ - \frac{n}{t}} + R^{p^+} \right)^{\frac{1}{p^-}}. \label{4.4-i}
\end{align}
We take $k =k_i:= M(u,2R) - \frac{\osc(u,2R)}{2^{i+1}}$, $h = k_{i-1}$ and $\Delta_i := A(k_{i-1},R)\setminus A(k_i,R)$. Note that with these choices $k-h=\frac{\osc(u,2R)}{2^{i+1}}$. Furthermore, under the assumption that $\osc(u,2R) \geq 2^{\nu+1} \eta R^{p^-/p^+}$ for some $\nu \in \N$ and $\eta \geq 1$, we have
\begin{equation*}
R \leq \left(\frac{\osc(u,2R)}{2^{\nu+1}\eta}\right)^{\frac{p^+}{p^-}}.
\end{equation*}
Thus \eqref{4.4-i} becomes
\begin{align*}
&\frac{\osc(u,2R)}{2^{i+1}} |A(k_i,R)|^{\frac{n-1}{n}} \\
& \leq C |\Delta_i|^{\frac{p^--1}{p^-}} R^{\frac{n-p^+}{p^-}}  \Bigg[\left(\frac{\osc(u,2R)}{2^i}\right)^{p^+} + \Big(1 +\sup_\Omega |u|^{p^+_{B_{2R}}} \Big) \left(\frac{\osc(u,2R)}{2^{\nu+1}\eta}\right)^{\frac{p^+}{p^-}\left(p^+ - \frac{n}{t}\right)} \\
&\phantom{AAAAAAAAAAAAa}+ \left(\frac{\osc(u,2R)}{2^{\nu+1}\eta}\right)^{\frac{(p^+)^2}{p^-}}
\Bigg]^{\frac{1}{p^-}}.
\end{align*}
Note that for every $i \in \N$ such that $i \leq \nu$, we have $|A(k_\nu,R)|^{\frac{n-1}{n}} \leq |A(k_i,R)|^{\frac{n-1}{n}}$. Moreover, $|A(k_\nu,R)|^{1/n} \leq cR$ and $|\Delta_i|^{\frac{p^--1}{p^-}} \leq c R^{n - \frac{n}{p^-}}$. Thus
\begin{align*}
|A(k_\nu,R)| \leq 2 c R^{1 + n - \frac{n}{p^-} + \frac{n-p^+}{p^-}}
& \Bigg[\left(\frac{\osc(u,2R)}{2^i}\right)^{\frac{p^+}{p^-}-1} + \Big(1 +\sup_\Omega |u|^{\frac{p^+_{B_{2R}}}{p^-}} \Big) \left(\frac{\osc(u,2R)}{2^{\nu+1}\eta}\right)^{\frac{p^+}{(p^-)^2}\left(p^+ - \frac{n}{t}\right)-1} \\
&+ \left(\frac{\osc(u,2R)}{2^{\nu+1}\eta}\right)^{\frac{(p^+)^2}{(p^-)^2}-1}\Bigg].
\end{align*}
We take the sum over $i=1, \ldots, \nu$, divide by $R^n$ and estimate $\sum_{i=1}^\nu 2^{(1-p^+/p^-)i} \leq c(p^+,p^-)$. As a result we obtain:
\begin{align*}
& \sum_{i=1}^\nu |A(k_\nu,R)| = \nu \frac{|A(k_\nu,R)|}{R^n} \\
& \leq c R^{\frac{p^--p^+}{p^-}} \Bigg\{\osc(u,2R)^{\frac{p^+}{p^-}-1} + \Big(1 + \sup_\Omega |u|^{\frac{p^+_{B_{2R}}}{p^-}}\Big)
\frac{\sum_{i=1}^\nu 2^i}{2^{(\nu+1)\frac{p^+}{(p^-)^2}(p^+ - \frac{n}{t})}} \osc(u,2R)^{\frac{p^+}{(p^-)^2}\left(p^+ - \frac{n}{t}\right)-1} \\
& +\frac{\sum_{i=1}^\nu 2^i}{2^{(\nu+1)\frac{(p^+)^2}{(p^-)^2}}} \osc(u,2R)^{\frac{(p^+)^2}{(p^-)^2}-1}
\Bigg\}.
\end{align*}
We conclude that
\begin{align*}
\frac{|A(k_\nu,R)|}{R^n} \leq c \frac{R^{\frac{p^--p^+}{p^-}}}{\nu}
\Big( \osc(u,2R)^{\frac{p^+}{p^-}-1}+ \osc(u,2R)^{\left(\frac{p^+}{p^-}\right)^2-1}
+ \tilde{c}\osc(u,2R)^{\frac{p^+}{(p^-)^2}\left(p^+ - \frac{n}{t}\right)}
\Big).
\end{align*}
Here $\tilde{c}:=2^{(\nu+1)\left( 1 - \frac{p^+}{(p^-)^2} (p^+ - \frac{n}{t})\right)}$ while $c$ also depends on $\sup_\Omega |u|$.
\end{proof}
\begin{thm}[H\"older continuity of quasiminimizers]\label{new-Theorem-4-1}
Let $p$ be a variable exponent satisfying \eqref{p-assump} and $u$ be a quasiminimizer of $\mathcal{F}_{\Om}$ defined at \eqref{energy-fun}--\eqref{F-growth2} under the additional assumption that $t > n$. Then $u$ is locally H\"older continuous in $\Omega$ with the exponent $0<\alpha<1$ depending on $p^-, p^+, t, n$.
\end{thm}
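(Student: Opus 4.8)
The plan is to run the classical De~Giorgi oscillation--decay scheme and then iterate it. First, applying Theorem~\ref{new-prop-4.2} with $k=0$ to $u$ and to $-u$ — the structure conditions \eqref{F-growth}--\eqref{F-growth2} involve $y,z$ only through $|y|,|z|$, so $-u$ is again a $K$-quasiminimizer of a functional of the same type — shows that $u$ is locally bounded in $\Om$. Fix $x_0\in\Om$ and $R_0>0$ with $B_{2R_0}(x_0)\Subset\Om$, put $M_0:=\sup_{B_{2R_0}}|u|<\infty$, and write $M(\rho),m(\rho),\osc(\rho)$ for $M(u,\rho),m(u,\rho),\osc(u,\rho)$ on concentric balls; also write $p^-,p^+$ for $\essinf$, $\esssup$ of $p$ over the ball under discussion, always pinched between the fixed numbers $p^-_\Om$ and $p^+_\Om$. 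The goal is to produce $\lambda\in(0,1)$, $\bar\gamma>0$ and $R_2\in(0,R_0)$ such that $\osc(\rho/4)\le\lambda\,\osc(\rho)+C\rho^{\bar\gamma}$ for all $\rho\le R_2$; a standard iteration (cf. Giusti~\cite{Giusti}) then gives $\osc(\rho)\le C(\rho/R_2)^{\alpha}\bigl(\osc(R_2)+CR_2^{\bar\gamma}\bigr)$, and, running this over balls centred at the points of $B_{R_2/2}(x_0)$ and then varying $x_0$, we get $u\in C^{0,\alpha}_{\mathrm{loc}}(\Om)$ with $0<\alpha<1$ depending only on $n,p^-,p^+,t$.

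To prove the oscillation--decay estimate, fix $B_{2R}\subset B_{2R_0}$ with $R$ small and put $k_0=\tfrac12\bigl(M(2R)+m(2R)\bigr)$. One of the sets $A(k_0,R)$, $B_R\setminus A(k_0,R)$ has measure $\le\tfrac12|B_R|$; since $-u$ is a quasiminimizer of the same type (cf.\ also Lemma~\ref{new-ren-2.5}) and the substitution $u\mapsto-u$ interchanges these two sets without changing the oscillation, we may assume $|A(k_0,R)|\le\tfrac12|B_R|$, so that Lemma~\ref{new-lemma-4.4} applies with $\delta=\tfrac12$. Fix $\eta>1$ and, for a $\nu\in\N$ to be chosen in the hard case below, split. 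If $\osc(2R)<2^{\nu+1}\eta\,R^{p^-/p^+}$ then, using $R<1$ and $p^-/p^+\ge p^-_\Om/p^+_\Om=:\gamma_2>0$, we get $\osc(R/2)\le\osc(2R)\le 2^{\nu+1}\eta\,R^{\gamma_2}$, which is already of the desired form.

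In the complementary case $\osc(2R)\ge 2^{\nu+1}\eta\,R^{p^-/p^+}$, Lemma~\ref{new-lemma-4.4} together with Remark~\ref{new-rem-4.5} gives, once $R$ is small enough, $|A(k_\nu,R)|/R^n\le\tilde C/\nu$ with $k_\nu=M(2R)-2^{-\nu-1}\osc(2R)$ and $\tilde C$ independent of $\nu$, hence this ratio can be made arbitrarily small by enlarging $\nu$. Apply Theorem~\ref{new-prop-4.2} at the admissible level $k=k_\nu$ on $B_R$: on $A(k_\nu,R)$ one has $0\le u-k_\nu\le M(2R)-k_\nu=2^{-\nu-1}\osc(2R)$, and this quantity is $\ge\eta R^{p^-/p^+}\ge\eta R>R$ (since $\eta>1$, $R<1$), so $\bigl|(u-k_\nu)/R\bigr|^{p(x)}\le\bigl(2^{-\nu-1}\osc(2R)/R\bigr)^{p^+}$ pointwise on $A(k_\nu,R)$. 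Inserting these two bounds into \eqref{new-prop-4.2-est} yields
\[
\sup_{B_{R/2}}(u-k_\nu)\;\le\; c\,R^{(p^--p^+)/p^+}\Bigl(\tfrac{\tilde C}{\nu}\Bigr)^{(1+\beta)/p^+}2^{-\nu-1}\osc(2R)\;+\;c\,R^{(p^--n/t)/p^+}.
\]
Two facts about these exponents close the case. Since $t>n>n/p^-_\Om$, one has $(p^--n/t)/p^+\ge(p^-_\Om-n/t)/p^+_\Om=:\gamma_1>0$, so the last term is $\le cR^{\gamma_1}$. By the log-H\"older condition in \eqref{p-assump} the factor $R^{(p^--p^+)/p^+}=R^{-(p^+-p^-)/p^+}$ stays bounded as $R\to0^+$ (the exponent times $\log(1/R)$ is bounded, cf.\ the discussion in Remark~\ref{new-rem-4.5}); call the bound $c_L$. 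As $(1+\beta)/p^+\ge 1/p^+_\Om>0$ is a fixed positive exponent, we may now choose $\nu$ so large that $c\,c_L(\tilde C/\nu)^{(1+\beta)/p^+}\le\tfrac12$, and obtain $\sup_{B_{R/2}}(u-k_\nu)\le 2^{-\nu-2}\osc(2R)+cR^{\gamma_1}$. Since $m(R/2)\ge m(2R)$ and $M(R/2)\le k_\nu+\sup_{B_{R/2}}(u-k_\nu)$, this gives $\osc(R/2)\le\bigl(1-2^{-\nu-2}\bigr)\osc(2R)+cR^{\gamma_1}$. With $\bar\gamma:=\min\{\gamma_1,\gamma_2\}$, $\lambda:=1-2^{-\nu-2}\in(0,1)$ and $\rho:=2R$ the two cases combine into $\osc(\rho/4)\le\lambda\,\osc(\rho)+C\rho^{\bar\gamma}$, as needed.

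The crux is the displayed inequality: in the variable-exponent setting the cancellation that, for $p=\mathrm{const}$, turns $R^{p^-/p^+}\cdot\bigl\|(u-k_\nu)/R\bigr\|$ into a bounded quantity no longer occurs, and one is left with the factor $R^{(p^--p^+)/p^+}$, which must be controlled uniformly in $R$ — this is exactly where the log-H\"older hypothesis is indispensable, and it is the reason Theorem~\ref{new-prop-4.2} had to be established with the sharp power $R^{p^-/p^+}$ and with the level-set ratio $|A(k,R)|/R^n$ retained, and why Lemma~\ref{new-lemma-4.4} carries the extra oscillation term produced by the $x$-dependent $b$ and by $g\in L^t$ with $t>n$. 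A secondary, purely bookkeeping point — already present in Lemma~\ref{new-lem-2.4} — is that the choice of $\nu$ (hence of $\lambda$) and several intermediate constants depend on $\|u\|_{\Wpxloc(\Om)}$ via $M_0$; by Remark~\ref{Remark-K-and-norm} this does not affect the qualitative conclusion, but it has to be propagated consistently through Theorem~\ref{new-prop-4.2}, Lemma~\ref{new-lemma-4.4} and the final iteration.
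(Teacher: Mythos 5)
Your proposal is correct and takes essentially the same route as the paper's proof: the same reduction to $|A(k_0,R)|\le\tfrac12|B_R|$ via $-u$, the same dichotomy on $\osc(u,2R)$ versus $2^{\nu+1}\eta R^{p^-/p^+}$, the same use of Lemma~\ref{new-lemma-4.4} and Remark~\ref{new-rem-4.5} to make the level-set ratio small, Theorem~\ref{new-prop-4.2} applied at the levels $k_\nu$, and the standard iteration lemma (Lemma 7.3 of Giusti) to conclude. The only cosmetic difference is that you bound $|(u-k_\nu)/R|^{p(x)}$ pointwise by its $p^+$-power using the case hypothesis up front, so the oscillation appears to the first power immediately, whereas the paper keeps the exponent $p^-/p^+$ on $\osc(u,2R)/2^{\nu+1}$ and converts it back to the first power via \eqref{osc-and-nu-i}--\eqref{osc-and-nu-ii}; the two manipulations are equivalent and both rest on the log-H\"older boundedness of $R^{p^--p^+}$.
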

\begin{proof}
We use again the fact that $u$ is a quasiminimizer of $\mathcal{F}_1$, see Lemma \ref{new-lem-2.4}. Fix a ball $B_{2R} \subset \Omega$ with $2R < R_1$, let $k_0 = \frac12 ( M(u,2R) + m(u,2R))$. We can also assume $|A(k_0,R)| \leq \frac12 |B_R|$, as otherwise $|B_R| - |A(k_0,R)| \leq \frac12 |B_R|$, and all the following arguments would apply to $-u$.

Let us fix $k_i = M(u,2R) - \frac{\osc(u,2R)}{2^{i+1}}$, $i \in \N$. Then $k_i \geq k_0$ and $k_i$ increases to $M(u,2R)$ as $i\to\infty$. In particular, if we take $M = 2\sup_\Omega |u|$, Theorem \ref{new-prop-4.2} holds with $k$ replaced by $k_i$ for every $i \in \N$, giving
\[
\sup_{B_{R/2}} (u-k_i) \leq c R^{p^-/p^+} \left(  \left( \frac{|A(k,R)|}{R^n} \right)^\beta \vint_{A(k_i,R)} \left| \frac{u-k_i}{R} \right|^{p(x)}\,dx + R^n  \right)^{1/p^+}.
\]
Moreover, assuming for simplicity that $R<1$ and taking into account that
\[
\sup_{B_{R/2}} (u-k_i) \leq \frac{\osc(u,2R)}{2^{i+1}} \leq \frac{M}{2^{i+1}} < 1
\]
holds for $i \geq i_0$ for every $R$, we have the following estimate for every $i \geq i_0$ and every $0 < R < 1$:
\[
\sup_{B_{R/2}} (u-k_i) \leq c R^{\frac{p^-}{p^+}-\frac{n}{tp^+}} + c R^{\frac{p^-}{p^+}-1} \left(\frac{\osc(u,2R)}{2^{i+1}}\right)^{\frac{p^-}{p^+}} \left(\frac{|A(k_i,R)|}{R^n}\right)^{\frac{1+\beta}{p^+}}.
\]
We shall consider two cases. First, assume that for some $\nu \in \N$ to be chosen later
\begin{equation}\label{first-case}
\frac{\osc(u,2R)}{2^{\nu+1}} \geq R^{\frac{p^-}{p^+}}
\end{equation}
holds.
Then by Lemma \ref{new-lemma-4.4}, with $\eta = 1$, and by Remark \ref{new-rem-4.5},
\[
\sup_{B_{R/2}} (u-k_i) \leq c R^{\frac{1}{p^+}\left(1-\frac{n}{t}\right)} + c R^{\frac{p^-}{p^+}-1} \left(\frac{\tilde{C}}{\nu}\right)^{\frac{1+\beta}{p^+}} \left(\frac{\osc(u,2R)}{2^{i+1}}\right)^{\frac{p^-}{p^+}}.
\]
By the definition of $k_\nu$, this means that
\[
M(u,R/2) - M(u,2R) + \frac{\osc(u,2R)}{2^{\nu+1}} \leq c\left(\frac{\tilde{C}}{\nu}\right)^{\frac{1+\beta}{p^+}} R^{\frac{p^-}{p^+}-1} \left( \frac{\osc(u,2R)}{2^{\nu+1}} \right)^{\frac{p^-}{p^+}} + c R^{\frac{n+p^-}{p^+}}.
\]
If we subtract $m(u,R/2)$ and $m(u,2R)$ from both sides, then we get that
\begin{align*}
&\osc(u,R/2) \leq \left( 1 - \frac{1}{2^{i+1}} \right) \osc(u,2R) + c \left(\frac{\tilde{C}}{\nu}\right)^{\frac{1+\beta}{p^+}} R^{\frac{p^-}{p^+}-1} \left(\frac{\osc(u,2R)}{2^{i+1}}\right)^{\frac{p^-}{p^+}} + c R^{\frac{1}{p^+}\left(1-\frac{n}{t}\right)}.
\end{align*}
Note that \eqref{first-case} can be written as
\[
\frac{\osc(u,2R)}{2^{\nu+1}R^{\frac{p^-}{p^+}}} \geq 1.
\]
Thus, we have the estimate
\begin{align}\label{osc-and-nu-i}
 \left(\frac{\osc(u,2R)}{2^{\nu+1}}\right)^{\frac{p^-}{p^+}} = \left(\frac{\osc(u,2R)}{2^{\nu+1}R^{\frac{p^-}{p^+}}}\right)^{\frac{p^-}{p^+}} R^{\frac{(p^-)^2}{(p^+)^2}}
\leq \frac{\osc(u,2R)}{2^{\nu+1}R^{\frac{p^-}{p^+}}} R^{\frac{(p^-)^2}{(p^+)^2}}
= \frac{\osc(u,2R)}{2^{\nu+1}} R^{\frac{(p^-)^2}{(p^+)^2} - \frac{p^-}{p^+}}.
\end{align}
By Remark \ref{new-rem-4.5} we may fix $\nu \geq i_0$ large enough and $R$ small enough so that
\begin{align}\label{osc-and-nu-ii}
c\left(\frac{\tilde{C}}{\nu}\right)^{\frac{1+\beta}{p^+}}  R^{\frac{p^--p^+}{p^+}} R^{\frac{p^-}{p^+}
\left(\frac{p^--p^+}{p^+}\right)} \leq \frac12.
\end{align}
Combining \eqref{osc-and-nu-i} and \eqref{osc-and-nu-ii} we get that
\[
c\left(\frac{\tilde{C}}{\nu}\right)^{\frac{1+\beta}{p^+}}  R^{\frac{p^--p^+}{p^+}} \left(\frac{\osc(u,2R)}{2^{\nu+1}}\right)^{\frac{p^-}{p^+}} \leq \frac{\osc(u,2R)}{2^{\nu+2}},
\]
and therefore
\[
\osc(u,R/2) \leq \osc(u,2R) \left( 1 - \frac{1}{2^{i+1}} \right) + c R^{\frac{1}{p^+}\left(1-\frac{n}{t}\right)}.
\]

On the other hand, suppose that
\[
\frac{\osc(u,2R)}{2^{\nu+1}} > R^{\frac{p^-}{p^+}}.
\]
Then
\[
\osc(u,R/2) \leq 2^{\nu+1} R^{p^-/p^+}.
\]

In both cases we have that
\[
\osc(u,R/2) \leq \osc(u,2R) \left( 1 - \frac{1}{2^{i+1}} \right) + c 2^{\nu+1} R^{\frac{p^-}{p^+}} \max\left\{1,\frac{1}{R^{\frac{n}{tp^+}}}\right\}.
\]
We are in a position to apply Lemma 7.3 from \cite{Giusti}, cf. Lemma 4.6 in \cite{Chiado-Coscia}, with
\begin{align*}
f(t) &= \osc(u,2t), \\
\tau &= 1/4, \\
\sigma &= \log_\tau \left( 1 - \frac{1}{2^{\nu+2}} \right), \\
A &= c 2^{\nu+2} \quad\textrm{ and} \\
\alpha &< \min\left\{\sigma, \left(p^- - \frac{n}{t}\right)\frac{1}{p^+} \right\}.
\end{align*}
Consequently we get that for every $r < R \leq 1$
\[
\osc(u,2r) \leq c \left( \left(\frac{r}{R}\right)^\alpha \osc(u,2R) + Ar^\alpha \right),
\]
where $c = c(\tau,\sigma,\alpha)$. It follows that $u \in C^{0,\alpha}(\Omega)$.
\end{proof}

\section{Applications to PDEs}\label{Section4}

The purpose of this section is to apply
Theorem~\ref{new-Theorem-4-1} to a class of $(A, B)$-harmonic
equations, with variable exponent growth of $A$ and $B$, and to show
the H\"older continuity of their solutions. This goal is obtained by
proving that such solutions are quasiminimizers of an energy
functional $J_{\Om}$ under the general growth type \eqref{F-growth}, see
\eqref{defn-J}--\eqref{definition-of-z} and Theorem~\ref{AB-quasim}. The fact that, under our assumptions, the
coefficient $b$ may depend on $x\in \Om$ allows us to cover wider
classes of PDEs than those studied so far in the literature, cf.
Theorem 2.1 in Giusti--Giaquinta~\cite{Gia-Giu} and Theorem 2.2 in
Fan--Zhao~\cite{Fan-Z}. We illustrate our discussion with Examples 1
and 2. There we show the local H\"older continuity of weak solutions
to equations
\begin{align*}
-\div(|\nabla u|^{p(x)-2}\nabla u) &= V(x) |u|^{q(x)-2}u, \\
-\div (|\nabla u|^{p(x)-2}\nabla u) + |u|^{p(x)-2}u & = \lambda g(x) |u|^{\alpha(x)-2}u - h(x) |u|^{\beta(x)-2}u +K(x).
\end{align*}
Let us consider the following elliptic equation in a domain $\Om\subset \R^n$
\begin{align}\label{sol-equation}
-\div A(x,u,\nabla u) = B(x,u,\nabla u),
\end{align}
where $A, B:\Om\times\R\times\R^n\to \R$ satisfy the structural conditions
\begin{align}
A(x,u,\xi)\xi &\geq \mu |\xi|^{p(x)} - b(x)|u|^{\tilde{r}(x)} - f(x), \nonumber \\
|A(x,u,\xi)| &\leq \mu|\xi|^{p(x)-1} + b(x)|u|^{\tsigmax} + g(x), \label{AB-cond}\\
|B(x,u,\xi)| &\leq \mu|\xi|^{\ttaux} + b(x)|u|^{\tdeltax} + h(x).\nonumber
\end{align}
Here $\mu\geq 1$, $f \in L^t(\Om)$, $g \in L^{tp'(\cdot)}(\Om)$ and
$h \in L^{tr'(\cdot)}(\Om)$; $b$ will be considered momentarily. For
the exponents, we assume their boundedness and define them for $x\in
\Om$ as follows:
\begin{align}
p(x)&\leq r(x)\leq p^*(x), \nonumber \\
\tilde{r}(x) &= r(x)-\tilde{\epsilon} \nonumber \qquad \textrm{ for some } 0 < \tilde{\epsilon} < p^--1, \nonumber \\
\tilde{\sigma}(x) &= r(x)\frac{p(x)-1}{p(x)}, \qquad
\tilde{\tau}(x) = p(x)\left( 1 - \frac{1}{\tilde{r}(x)}\right), \qquad
\tilde{\delta}(x) = \tilde{r}(x) - 1. \label{AB-exp-cond}
\end{align}
These exponents differ from those of Fan--Zhao~\cite[Theorem 2.2]{Fan-Z} by using $\tilde{r} = r-\tilde{\epsilon}$ instead of $r$. Therefore, our assumptions on the exponents are marginally stronger.

In order to use Theorem~\ref{new-Theorem-4-1} we want $\mathbb{B}
\in L^{\sigma}(\Om)$ and $z \in L^t(\Om)$ to hold for the auxiliary functions
$z$ and $\mathbb{B}$ defined below, see \eqref{definition-of-z}. The
first condition requires that $b$, $b^{\frac{\p}{\p-1}}$ and
$b^{\frac{\rdot}{\tilde{r}(\cdot)-1}}$ belong to $L^\sigmadot(\Om)$. The
condition which implies all three cases is
\[
b \in L^{\sigmadot \frac{\p}{\p-1}}(\Om) \qquad \textrm{ when } p \leq
\frac{r}{1+\tilde{\epsilon}},
\]
and
\[
b \in L^{\sigmadot \frac{\rdot}{\tilde{r}(\cdot)-1}}(\Om) \qquad \textrm{
when } p> \frac{r}{1+\tilde{\epsilon}}.
\]
Additionally, in order to have $z \in L^t(\Om)$ we require that
$\mathbb{B}^{\frac{p^*_-}{p^*_- - r_+}} \in L^t(\Om)$. Combining the
demands of $\mathbb{B}$ and $z$ results in the assumption $b \in
L^{s(\cdot)}(\Om)$, where
\begin{equation*}
s(x) =
\begin{cases}
& \frac{p(x)}{p(x)-1} \max\{\sigma(x), t \frac{p^*_-}{p^*_- - r_+} \} \qquad \textrm{ if } p(x) \leq  \frac{r(x)}{1+\tilde{\epsilon}},  \\
 & \frac{r(x)}{\tilde{r}(x)-1} \max\{\sigma(x), t \frac{p^*_-}{p^*_- - r_+} \} \qquad \textrm{ if } p(x) > \frac{r(x)}{1+\tilde{\epsilon}}.
\end{cases}
\end{equation*}

If $p > \frac{r}{1+\tilde{\epsilon}}$, then $\frac{r}{r-\tilde{\epsilon}-1} \leq \frac{p(1+\tilde{\epsilon})}{p-\tilde{\epsilon}-1}$. Moreover, it always holds that $\frac{p}{p-1} \leq \frac{p(1+\tilde{\epsilon})}{p-\tilde{\epsilon}-1}$. Thus, we can replace the above integrability condition on $s$ with the following one:
\begin{equation}\label{this-is-s}
s(x) = \frac{p(x)(1+\tilde{\epsilon})}{p(x)-\tilde{\epsilon}-1} \max\{\sigma(x), t \frac{p^*_-}{p^*_- - r_+} \}.
\end{equation}

\begin{rem}
We have made no assumption on the relative sizes of $\sigma$ and $t$. If we explicitly chose $t = n + \epsilon_c$ and $\sigma = \frac{p^*}{p^*-r} + \epsilon_c$ for some small $\epsilon_c > 0$, cf. assumptions \eqref{F-growth2}, we could write \eqref{this-is-s} as $b \in L^{s(\cdot)}(\Om)$ where $ s = (1+\tilde{\epsilon})(n+ \epsilon_c) \frac{p}{p-\tilde{\epsilon}-1} \frac{p^*_-}{p^*_- - r_+}$, and proceed with this assumption as below.
\end{rem}

We say that $u \in \Wpx(\Omega)$ is a weak solution of equation \eqref{sol-equation} in $\Omega$, if for all $\phi \in C^\infty_0(\Omega)$, it holds that
\[
\int_\Omega A(x,u,\nabla u)\nabla\phi\,dx = \int_\Omega  B(x,u,\nabla u)\phi\,dx.
\]
We then extend the pool of test functions to allow $\phi \in \Wpxzero(\Omega)$ by the density argument, see e.g. \cite[Chapter 9]{DHHR}.

Define an energy functional $J_{\Om}(u)$ as
\begin{equation}\label{defn-J}
J_{\Om}(u) := \int_{\Om} \left( |\nabla u|^{p(x)} + \mathbb{B}(x)|u|^{r(x)} + z(x) \right)\,dx,
\end{equation}
where
\begin{align}\label{definition-of-z}
\mathbb{B}(x)&:= c(p^+, p^-, r^+, r^-)\left( b(x) + b(x)^{\frac{p(x)}{p(x)-1}} + b(x)^{\frac{r(x)}{\tilde{r}(x)-1}} \right),\nonumber \\
z(x)&:= 1 + b(x) + f(x) + g(x)^{p'(x)} + h(x)^{r'(x)} + \mathbb{B}(x)+\mathbb{B}^{\frac{p^*_-}{p^*_- - r_+}}(x).
\end{align}
 By the above discussion on the integrability of coefficients and exponents in \eqref{AB-cond}, \eqref{AB-exp-cond} and \eqref{this-is-s}, we have that $B, z$ satisfy assumptions \eqref{F-growth} and \eqref{F-growth2}.
 
\begin{thm}\label{AB-quasim}
If $u$ is a weak solution of equation \eqref{sol-equation} under assumptions \eqref{AB-cond}--\eqref{this-is-s}, then $u$ is a local quasiminimizer of $J_{\Om}$.
\end{thm}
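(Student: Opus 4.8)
The plan is to take the weak formulation of \eqref{sol-equation}, test it against the admissible variation $\phi:=u-v$, and turn it, via the structure conditions \eqref{AB-cond}--\eqref{AB-exp-cond} and the pointwise variable exponent Young inequality, into the quasiminimality inequality $J_{\Om'}(u)\le K\,J_{\Om'}(v)$ of Definition~\ref{quasi-def}. First I would fix an open set $\Om'\Subset\Om$ and $v\in\Wpxloc(\Om)$ with $\phi=u-v\in\Wpxzero(\Om')$ (so, by density, $\phi$ is an admissible test function). Since $f,g,h,b$ lie in the Lebesgue spaces forced by \eqref{AB-cond}, \eqref{AB-exp-cond} and \eqref{this-is-s}, the functions $\mathbb{B},z$ of \eqref{definition-of-z} satisfy the integrability \eqref{F-growth2}, and since $r\le\pstarx$ the Sobolev embedding makes $\int_{\Om'}\mathbb{B}|u|^{\rx}<\infty$; hence $J_{\Om'}(u)<\infty$, and I may assume $J_{\Om'}(v)\le J_{\Om'}(u)$, as otherwise there is nothing to prove. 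Putting $\phi$ into $\int_{\Om'}A(x,u,\nabla u)\cdot\nabla\phi=\int_{\Om'}B(x,u,\nabla u)\phi$ and rearranging gives
\[
\int_{\Om'}A(x,u,\nabla u)\cdot\nabla u\,dx=\int_{\Om'}A(x,u,\nabla u)\cdot\nabla v\,dx+\int_{\Om'}B(x,u,\nabla u)\,\phi\,dx .
\]

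On the left I would use the coercivity part of \eqref{AB-cond}, so that $\mu\int_{\Om'}|\nabla u|^{\px}\le\int_{\Om'}A(x,u,\nabla u)\cdot\nabla u+\int_{\Om'}\big(b|u|^{\tilde{r}(x)}+f\big)$, which reduces matters to an upper bound for the right-hand side. There I would estimate $|A|,|B|$ by the growth parts of \eqref{AB-cond} and split each product by Young's inequality; the exponents in \eqref{AB-exp-cond} are designed so that the variable conjugate exponents close up. For instance $b|u|^{\tsigmax}|\nabla v|$ (with $b$ kept next to $|u|$) becomes $C\,b^{\px/(\px-1)}|u|^{\rx}+C|\nabla v|^{\px}$ since $\tsigmax\tfrac{\px}{\px-1}=\rx$, while $|\nabla u|^{\ttaux}|\phi|$ becomes $\varepsilon|\nabla u|^{\px}+C|\phi|^{\tilde{r}(x)}$ since $(p(x)/\ttaux)'=\tilde{r}(x)$; the products $|\nabla u|^{\px-1}|\nabla v|$, $g|\nabla v|$, $b|u|^{\tdeltax}|\phi|$, $h|\phi|$, and the coercivity term $b|u|^{\tilde{r}(x)}$, are handled in the same manner. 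The outcome is a sum of: terms $\varepsilon|\nabla u|^{\px}$, absorbed on the left for $\varepsilon$ small; terms $b^{\px/(\px-1)}|u|^{\rx}$, $b^{\rdot/(\tilde{r}(x)-1)}|u|^{\rx}$, $b|u|^{\rx}$, which (since $b,\ b^{\px/(\px-1)},\ b^{\rdot/(\tilde{r}(x)-1)}\le c^{-1}\mathbb{B}$ once the constant $c$ in \eqref{definition-of-z} is taken large, depending on $p^\pm,r^\pm$) are $\le c^{-1}\mathbb{B}|u|^{\rx}$; terms $|\nabla v|^{\px}$ and $\mathbb{B}|v|^{\rx}$, which are part of $J_{\Om'}(v)$; pure coefficient terms $b,f,g^{p'(x)},h^{r'(x)},1\le z$; and unweighted terms $|\phi|^{\tilde{r}(x)},|\phi|^{\rx}$ (here $|\zeta|^{\tilde{r}(x)}\le1+|\zeta|^{\rx}$ is used to reduce $b|\phi|^{\tilde{r}(x)}$ and $b|u|^{\tilde{r}(x)}$ to the previous types).

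What remains are the unweighted integrals $\int_{\Om'}|\phi|^{\tilde{r}(x)},\int_{\Om'}|\phi|^{\rx}$ and the finite quantity $\int_{\Om'}\mathbb{B}|u|^{\rx}$. For the first two, $\tilde{r}(x)<\rx\le\pstarx$ gives $|\phi|^{\tilde{r}(x)}+|\phi|^{\rx}\le2+2|\phi|^{\pstarx}$, and I would control $\int_{\Om'}|\phi|^{\pstarx}$ by the variable exponent Sobolev inequality for $\Wpxzero(\Om')$ (cf.\ \cite{DHHR} and Proposition~\ref{CPC-Prop-3.1}) together with the unit ball property \eqref{unit-ball-prop}, in terms of $\int_{\Om'}|\nabla\phi|^{\px}\le2^{p^+}\big(\int_{\Om'}|\nabla u|^{\px}+\int_{\Om'}|\nabla v|^{\px}\big)$; the $|\nabla u|$-part is absorbed on the left, the $|\nabla v|$-part and the constant $|\Om'|\le\int_{\Om'}z$ are controlled by $J_{\Om'}(v)$, and the a priori finiteness of $J_{\Om'}(u)$ together with $J_{\Om'}(v)\le J_{\Om'}(u)$ turns the super-linear modular--norm conversion into a fixed constant. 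For $\int_{\Om'}\mathbb{B}|u|^{\rx}$ I would use the H\"older inequality with $\mathbb{B}\in L^{\sigmadot}$ and $r\le\pstar$, together with the local boundedness of $u$ obtained directly from \eqref{sol-equation}, to dominate it by $C(\sup_{\Om'}|u|)\int_{\Om'}z\le C(\sup_{\Om'}|u|)J_{\Om'}(v)$. Collecting everything gives $J_{\Om'}(u)\le K\,J_{\Om'}(v)$ with $K$ depending on $n,p^\pm,r^\pm,\mu$, the norms of $f,g,h,b$, and (as in Remark~\ref{Remark-K-and-norm}) on $\|u\|_{\Wpxloc}$; since $\Om'$ was arbitrary, $u$ is a local quasiminimizer of $J_\Om$, and Theorem~\ref{new-Theorem-4-1} then yields the H\"older continuity.

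The main obstacle is the treatment of the lower-order terms generated by the right-hand side $B$ of \eqref{sol-equation}, above all $\int_{\Om'}|u-v|^{\tilde{r}(x)}$: when $\tilde{r}(x)$ exceeds $\px$ a Poincar\'e estimate is not enough, one must invoke the Sobolev inequality whose modular version is super-linear in $\int|\nabla\phi|^{\px}$, so that closing the estimate with a fixed quasiminimizing constant genuinely relies on the strict inequality $\tilde{r}<\pstar$ (ensured by $\tilde{\epsilon}>0$), the finiteness of $J_{\Om'}(u)$, and the admissible dependence of the constant on $\|u\|_{\Wpxloc}$ and $\sup_{\Om'}|u|$. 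Everything else is bookkeeping forced by the exponent choices \eqref{AB-exp-cond} and the definitions \eqref{defn-J}--\eqref{definition-of-z}.
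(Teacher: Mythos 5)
Your overall strategy coincides with the paper's: test the weak formulation with $\phi=u-v$, convert the structure conditions \eqref{AB-cond}--\eqref{AB-exp-cond} into the quasiminimality inequality via Young's inequality with the designed exponents, absorb the small $|\nabla u|^{\px}$ contributions on the left, and control the remaining $|u-v|$-terms by a Sobolev embedding whose superlinear dependence on the gradient modular is neutralized by an a priori bound on $\int|\nabla(u-v)|^{\px}$ (your reduction to the case $J_{\Om'}(v)\le J_{\Om'}(u)$ plays exactly the role of the paper's inequality \eqref{Lem2.4:ineq-trick}). The one structural difference in this part is cosmetic: you use the variable-exponent Sobolev inequality after the pointwise bound $|u-v|^{\tilde r(x)}+|u-v|^{\rx}\le 2+2|u-v|^{\pstarx}$, whereas the paper first localizes so that $r_+\le p^*_-$ and applies the constant-exponent embedding at level $p^*_-$ (which is where the term $\mathbb{B}^{p^*_-/(p^*_--r_+)}$ in $z$ originates, via \eqref{destroy-u-v-r-plus}); both routes close up, and both explain why the conclusion is only \emph{local} quasiminimality.

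The genuine gap is your treatment of $\int_{\Om'}\mathbb{B}(x)|u|^{\rx}\,dx$: you dominate it by $C(\sup_{\Om'}|u|)\int_{\Om'}z$ using ``the local boundedness of $u$ obtained directly from \eqref{sol-equation}.'' That boundedness is not available at this stage: in the paper's architecture local boundedness is a \emph{consequence} of $u$ being a quasiminimizer (via \cite{Toivanen-2}), which is precisely what Theorem~\ref{AB-quasim} is meant to establish, so invoking it here would require a separate de Giorgi/Moser iteration performed directly on the equation, which you do not supply. The paper avoids the issue by writing $|u|^{\rx}\le 2^{r_+}\bigl(|u-v|^{\rx}+|v|^{\rx}\bigr)$, keeping $\mathbb{B}|v|^{\rx}$ as part of $J_{\Om'}(v)$ and feeding $\mathbb{B}|u-v|^{r_+}$ back into the Young--Sobolev absorption (see \eqref{destroy-b-ur}--\eqref{AB-Sobemb}), and then repeating this when $\int\mathbb{B}|u|^{\rx}+\int z$ is added to both sides at \eqref{getting-there}. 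Alternatively, tools you already invoke repair the step without any boundedness: the variable exponent H\"older inequality pairing $\mathbb{B}\in L^{\sigmadot}$ with $|u|^{\rx}$ (admissible because $\sigma>\pstar/(\pstar-\rdot)$ forces the conjugate exponent times $r$ below $\pstar$, so the Sobolev embedding applies) bounds $\int_{\Om'}\mathbb{B}|u|^{\rx}$ by a constant depending only on $\|\mathbb{B}\|_{L^{\sigmadot}}$ and $\|u\|_{\Wpxloc}$, hence by $C\int_{\Om'}z\le C\,J_{\Om'}(v)$ since $z\ge 1$; such dependence of the quasiminimizing constant on $\|u\|_{\Wpxloc}$ is explicitly permitted by Remark~\ref{Remark-K-and-norm}. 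With that single substitution your argument is correct and essentially identical to the paper's.
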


For the definition of local quasiminimizers we refer to the discussion following Definition~\ref{quasi-def}.

\begin{rem}
We would like to emphasize that the dependence of the coefficient $\mathbb{B}$ in $J_{\Om}$ on a point $x \in \Om$ requires extra attention in the estimates below, see \eqref{from-struct-conditions} and further discussion. In the previously studied results $\mathbb{B}$ is either a constant or belongs to $L^{\infty}$, cf. Fan-Zhao~\cite{Fan-Z}.
\end{rem}

By combining Theorem~\ref{AB-quasim} with the local H\"older continuity result in Theorem~\ref{new-Theorem-4-1} we immediately obtain the following observation.

\begin{cor}\label{AB-quasi-cor}
 Assume that \eqref{AB-cond}--\eqref{this-is-s} hold. Then weak solutions of \eqref{sol-equation} are locally
 H\"older continuous.
\end{cor}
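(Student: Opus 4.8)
\medskip
\noindent\emph{Sketch of a proof.}

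The corollary is immediate once Theorem~\ref{AB-quasim} is available. Indeed $J_\Om$ has exactly the form \eqref{energy-fun} with integrand $f(x,y,\xi)=|\xi|^{p(x)}+\mathbb{B}(x)|y|^{r(x)}+z(x)$, which trivially satisfies the two-sided bound \eqref{F-growth} (take $\mu=1$, coefficient $b=\mathbb{B}$ and datum ``$g$''$=z$), while the structural requirements \eqref{F-growth2} hold because $\mathbb{B}\ge0$ lies in $L^{\sigma(\cdot)}(\Om)$ with $\sigma>p^*/(p^*-r)$ --- this is the content of \eqref{this-is-s} --- and $z\ge0$ lies in $L^{t}(\Om)$; choosing $t>n$ in \eqref{AB-cond}--\eqref{this-is-s}, a local quasiminimizer of $J_\Om$ is locally H\"older continuous by Theorem~\ref{new-Theorem-4-1} (with exponent depending only on $p^-,p^+,t,n$). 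So the whole matter reduces to proving Theorem~\ref{AB-quasim}, and here is how I would do that.

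The plan is the classical device for turning a weak solution into a quasiminimum (cf.\ Giaquinta--Giusti~\cite{Gia-Giu} and Fan--Zhao~\cite{Fan-Z}): insert $\phi=u-v$ into the weak formulation of \eqref{sol-equation}, use the structural bounds \eqref{AB-cond}, and convert the resulting identity into $J_{\Om'}(u)\le K\,J_{\Om'}(v)$ by absorbing the leading term $\int|\nabla u|^{p(x)}$ and bounding every other contribution by $J_{\Om'}(v)$. Since it suffices to prove the \emph{local} property, I would fix $x_0\in\Om$, take a ball $B\Subset\Om$ around $x_0$, and prove $J_B(u)\le K\,J_B(v)$ whenever $u-v\in\Wpxzero(B)$; the constant $K$ may depend on $n,p^\pm,r^\pm,\mu,\tilde{\epsilon}$, on the norms of $b,f,g,h$ over $B$, and on $\|u\|_{\Wpx(B)}$ (such a dependence being harmless, cf.\ Remark~\ref{Remark-K-and-norm}). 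With $A:=\{x\in B:u(x)\neq v(x)\}$ one has $\nabla u=\nabla v$ a.e.\ on $B\setminus A$, and $\phi=u-v$ (extended by zero) is admissible, so the equation and the coercivity bound in \eqref{AB-cond} give
\[
\mu\int_A|\nabla u|^{p(x)}\,dx\;\le\;\int_A A(x,u,\nabla u)\nabla v\,dx+\int_A B(x,u,\nabla u)(u-v)\,dx+\int_A\bigl(b\,|u|^{\tilde r(x)}+f\bigr)\,dx .
\]
The heart of the argument is the estimate of the two ``flux'' integrals; this is where the precise shape of $\mathbb{B}$ and $z$ in \eqref{definition-of-z} and the exponent identities built into \eqref{AB-exp-cond} are used. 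For $\int_A A(x,u,\nabla u)\nabla v$ I would apply $|A|\le\mu|\nabla u|^{p(x)-1}+b\,|u|^{\tsigmax}+g$ and the parametric Young inequality: $|\nabla u|^{p(x)-1}|\nabla v|$ yields $\varepsilon|\nabla u|^{p(x)}+C_\varepsilon|\nabla v|^{p(x)}$; $g|\nabla v|$ yields $|\nabla v|^{p(x)}+C\,g^{p'(x)}$ with $g^{p'(\cdot)}\in L^{t}$ (since $g\in L^{tp'(\cdot)}$), hence $\le z$; and $b\,|u|^{\tsigmax}|\nabla v|$ yields $|\nabla v|^{p(x)}+C\,b^{p(x)/(p(x)-1)}|u|^{r(x)}$, using $\tsigmax\,p'(x)=r(x)$ and then $b^{p(\cdot)/(p(\cdot)-1)}\le c^{-1}\mathbb{B}$ to get $C\,\mathbb{B}|u|^{r(x)}$. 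For $\int_A B(x,u,\nabla u)(u-v)$ I would apply $|B|\le\mu|\nabla u|^{\ttaux}+b\,|u|^{\tdeltax}+h$; since $\ttaux<p(x)$ and $p(x)/(p(x)-\ttaux)=\tilde r(x)$, Young gives $|\nabla u|^{\ttaux}|u-v|\le\varepsilon|\nabla u|^{p(x)}+C\,|u-v|^{\tilde r(x)}$, while the terms $b\,|u|^{\tdeltax}|u-v|$ and $h\,|u-v|$ are treated by the variable-exponent H\"older inequality, pairing $|u-v|$ against $b\,|u|^{\tdeltax}\in L^{\tilde r'(\cdot)}(B)$ --- here $\tdeltax=\tilde r(x)-1$, so $(b\,|u|^{\tdeltax})^{\tilde r'(x)}=b^{\tilde r/(\tilde r-1)}|u|^{\tilde r}\le C(1+\mathbb{B})|u|^{\tilde r}$ --- and against $h\in L^{r'(\cdot)}(B)$ (since $h\in L^{tr'(\cdot)}$). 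Crucially, every occurrence of $|u-v|$ is estimated with the Sobolev--Poincar\'e inequality for $\Wpxzero(B)$, $\|u-v\|_{L^{\tilde r(\cdot)}(B)}+\|u-v\|_{L^{r(\cdot)}(B)}\le C\,\|\nabla(u-v)\|_{\Lpx(B)}$ (valid since $\tilde r\le r\le p^*$), so that the competitor $v$ enters the right-hand side only through $\|\nabla v\|_{\Lpx(B)}$, hence only through $\int_B|\nabla v|^{p(x)}$, never to a higher power.

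All remaining terms are subcritical. The data terms $\int_B f$, $\int_B\mathbb{B}$, $\int_B g^{p'(x)}$, $\int_B h^{r'(x)}$ are bounded by $\int_B z\le J_B(v)$. The subcritical integrals of $|u|$ --- $\int_B\mathbb{B}|u|^{r(x)}$, $\int_B b\,|u|^{\tilde r(x)}$, $\int_B(1+\mathbb{B})|u|^{\tilde r(x)}$ --- are finite and controlled: since $\tilde r<r\le p^*$ and $\mathbb{B},b\in L^{\sigma(\cdot)}(B)$ (guaranteed by \eqref{this-is-s}), H\"older gives, e.g., $\int_B\mathbb{B}|u|^{\tilde r}\le C\,\|\mathbb{B}\|_{L^{\sigma(\cdot)}(B)}\,\|\,|u|^{\tilde r}\,\|_{L^{\sigma'(\cdot)}(B)}$ with $\tilde r\,\sigma'<r\,\sigma'<\pstarx$ by the choice $\sigma>p^*/(p^*-r)$, the last factor being controlled by $\Wpx(B)\hookrightarrow L^{\pstarx}(B)$; the $|u|^{r(x)}$-pieces are absorbed into the $\int_B\mathbb{B}|u|^{r(x)}$ already present in $J_B(u)$, which is exactly what the freely chosen constant $c=c(p^\pm,r^\pm)$ in \eqref{definition-of-z} is for. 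Choosing $\varepsilon$ small absorbs the $\varepsilon\int_A|\nabla u|^{p(x)}$ terms into the left side, and writing $J_B(u)=\int_{B\setminus A}|\nabla v|^{p(x)}+\int_A|\nabla u|^{p(x)}+\int_{B\setminus A}\mathbb{B}|v|^{r(x)}+\int_A\mathbb{B}|u|^{r(x)}+\int_B z$ (using $u=v$ a.e.\ on $B\setminus A$) and recalling $z\ge1$, so $J_B(v)\ge|B|>0$ and $\int_B|\nabla v|^{p(x)},\int_B\mathbb{B}|v|^{r(x)},\int_B z\le J_B(v)$, one arrives at $J_B(u)\le K\,J_B(v)$. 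Performing this around every point of $\Om$ shows that $u$ is a local quasiminimizer of $J_\Om$, which closes the reduction.

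The principal obstacle is the one the authors single out right after Theorem~\ref{AB-quasim}: $\mathbb{B}$ (equivalently $b$) genuinely depends on $x$, so, unlike in the constant- or $L^\infty$-coefficient settings of \cite{Gia-Giu,Fan-Z}, one cannot bound $\int\mathbb{B}|u|^{r(x)}$ by $C\int|u|^{r(x)}$. These terms, and all the lower-order terms thrown up by Young's inequality, must therefore be handled via the variable-exponent H\"older inequality against $\mathbb{B}\in L^{\sigma(\cdot)}$ together with the embedding $r\le p^*$; arranging that \emph{every} power of $b$ produced along the way lies in $L^{\sigma(\cdot)}$ (and that $\mathbb{B}^{\pstarm/(\pstarm-r_+)}\in L^{t}$, so that $z\in L^{t}$) is precisely what dictates the integrability hypothesis \eqref{this-is-s}. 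A secondary, more technical point is to organize the $B$-terms containing $|u-v|$ so that $v$ enters only linearly in $\int_B|\nabla v|^{p(x)}$; this is why one pairs $|u-v|$ with the other factor by H\"older and invokes Sobolev--Poincar\'e, rather than using Young to peel off a $|u-v|^{\tilde r}$ which would then have to be split as $C(|u|^{\tilde r}+|v|^{\tilde r})$ and would leave an uncontrolled $\int|v|^{r(x)}$.
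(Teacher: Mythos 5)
Your proof of the corollary is exactly the paper's: invoke Theorem~\ref{AB-quasim} to see that a weak solution is a local quasiminimizer of $J_{\Om}$, check that $\mathbb{B}$ and $z$ satisfy \eqref{F-growth}--\eqref{F-growth2} (with $t>n$), and apply Theorem~\ref{new-Theorem-4-1}. The lengthy sketch of Theorem~\ref{AB-quasim} itself is material the corollary does not require, since the paper establishes that theorem separately; in any case it follows the same strategy (testing the weak formulation with $u-v$, Young's inequality, and the Sobolev embedding), differing only in minor technical choices for handling the $|u-v|$ terms.
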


\begin{proof}
 It is enough to check that functions $\mathbb{B}$ and $z$, as defined above, satisfy the assumptions of Theorem~\ref{new-Theorem-4-1}.
\end{proof}

Before proving Theorem~\ref{AB-quasim} let us illustrate our presentation with examples of PDEs covered by our result.
\begin{ex}
Let $u\in \Wpxloc(\Om)$ be a weak solution of
\[
-\div(|\nabla u|^{p(x)-2}\nabla u) = V(x) |u|^{q(x)-2}u
\]
for $\p$ as in assumptions \eqref{p-assump} and a bounded continuous exponent $q$ such that $p-\tilde{\epsilon} \leq q \leq p^*-\tilde{\epsilon}$ in $\Om$ for $\tilde{\epsilon} < p^--1$; also let $V\in L^{\sigma(\cdot)}(\Om)$.
 Then $u$ satisfies assumptions of Theorem~\ref{AB-quasim} and, thus, by Corollary~\ref{AB-quasi-cor}, $u$ is locally H\"older continuous. Indeed, $A(x,u,\nabla u) = |\nabla u|^{p(x)-2}\nabla u$, whereas $B(x,u,\nabla u) = V(x) |u|^{q(x)-2}u$, and $A$ and $B$ satisfy \eqref{AB-cond} and \eqref{AB-exp-cond} for $\mu=1$, $f\equiv g\equiv h\equiv 0$ and  $\tilde{\delta} = q-1$. The latter implies that $r = q+\tilde{\epsilon}$ and so, by \eqref{AB-exp-cond}, one needs to assume that $p\leq q+\tilde{\epsilon}\leq p^*$.
\end{ex}

\begin{ex}
In Aouaoui~\cite{aou}, the following eigenvalue problem is studied in the context of existence and multiplicity of solutions:
\[
-\div (|\nabla u|^{p(x)-2}\nabla u) + |u|^{p(x)-2}u = \lambda g(x) |u|^{\alpha(x)-2}u - h(x) |u|^{\beta(x)-2}u +K(x),
\]
where $p$, $\alpha$ and $\beta$ are variable exponents, and $h, g$ and $K$ are positive functions. Moreover, \cite[Theorem 1.1]{aou} shows the existence of a solution for some $\lambda>0$ provided that $2 < \alpha(x) < p^*(x)$ in the set of points $x$ such that $\alpha(x) \geq \beta(x)$. Let us find conditions implying that $u$ is a quasiminimizer of an energy subject to Theorem~\ref{AB-quasim} and, thus, $u$ is H\"older continuous.

 Clearly, $A(x, u, \nabla u) = |\nabla u|^{p(x)-2}\nabla u$ and $B(x, u) = - |u|^{p(x)-2}u + \lambda g(x) |u|^{\alpha(x)-2}u - h(x) |u|^{\beta(x)-2}u + K(x)$. We estimate
\begin{align*}
|B(x, u)| &\leq \lambda g(x) |u|^{\alpha(x)-1} + h(x) |u|^{\beta(x)-1} + |u|^{p(x)-1} + K(x) \\
&\leq \left(\lambda g(x) + h(x) +1 \right) |u|^{\max\{\alpha(x)-1,\beta(x)-1,p(x)-1\}} + \left(\lambda g(x) + h(x) + K(x)+1 \right).
\end{align*}
By discussion in \cite{aou} we have that exponent $P(x) = \max\{\alpha(x)-1,\beta(x)-1,p(x)-1\}$ is bounded continuous and $P^{-}>0$. One immediately checks that the differential operator $A$ satisfies \eqref{AB-cond} and \eqref{AB-exp-cond} for any $\tilde{\sigma}$, $b\geq 0$ and $g\geq 0$, while $|B(x,u)| \leq b(x)|u|^{\tilde{\delta}} + k(x)$ with $b(x) = \lambda g(x) + h(x)+1$, $k(x) = \lambda g(x) + h(x) + K(x)+1$ and
\[
\tilde{\delta} = P(x)= \max\{\alpha(x),\beta(x),p(x)\}-1.
\]
 We require $\tilde{\delta} = \tilde{r}-1 = r - \tilde{\epsilon} - 1$. This implies that assumptions of Theorem~\ref{AB-quasim} are satisfied upon defining $r(x):= \max\{\alpha(x),\beta(x),p(x)\} + \tilde{\epsilon}$ and assuming that $b\in L^{s(\cdot)}, k\in L^t$ and $r=\max\{\alpha(x),\beta(x),p(x)\} + \tilde{\epsilon} \leq p^*(x)$.
\end{ex}

\begin{proof}[Proof of Theorem~\ref{AB-quasim}]
By the continuity of variable exponents $\p$ and $\rdot$ in $\Om$, for any $y\in \Om$ we may find its small neighborhood $\Om_{y}$ such that $r_+ \leq p^*_-$ in $\Om_{y}$. To this end we localize discussion by choosing $\Om'\Subset \Om_{y}$.

Let $v \in \Wpxloc(\Omega)$ be such that $u-v \in \Wpxzero(\Omega')$, with $S:= \supp (u-v) \subset\subset \Omega'$.

Since $u$ is a weak solution, we have
\[
\int_{\Om'} A(x,u,\nabla u)\nabla u = \int_{\Om'} A(x,u,\nabla u)\nabla v + \int_{\Om'} B(x,u,\nabla u)(u-v).
\]
Using the structural conditions \eqref{AB-cond}, we get
\begin{align}\label{from-struct-conditions}
\nonumber
\mu \int |\nabla u|^{p(x)} &\leq
(I) \int b(x)|u|^{\tilde{r}(x)} +
(1) \int f(x) \\ &+
(2) \mu\int |\nabla u|^{p(x)-1}|\nabla v| +
(II) \int b(x)|u|^{\tsigmax}|\nabla v| +
(3) \int g(x)|\nabla v| \\ &+
\nonumber
(4) \mu\int|\nabla u|^{\ttaux}|u-v| +
(III) \int b(x)|u|^{\tdeltax}|u-v| +
(5) \int h(x)|u-v|.
\end{align}
By using Arabic and Roman numerals we distinguish two types of integrals, depending whether they contain function $b$ or not. In the previously studied results $b$ is either a constant or belongs to $L^{\infty}$, while for us integrals (I)--(III) require additional effort.

For integrals (2), (3) and (5) we make the following simple estimates using Young's inequality:
\begin{align}\label{evals-1-through-4}
\mu\int |\nabla u|^{p(x)-1}|\nabla v| &\leq \epsilon_0 \mu\int |\nabla u|^{p(x)} +  \mu \int \epsilon_0^{1-p(x)} |\nabla v|^{p(x)}, \\
\nonumber \int g(x)|\nabla v| &\leq \int |\nabla v|^{p(x)} + \int g(x)^{p'(x)}, \\
\nonumber \int h(x)|u-v| &\leq \int |u-v|^{r(x)} + \int h(x)^{r'(x)}.
\end{align}
\noindent Similarly, we estimate integral (4) as follows
\begin{align}\label{eval-II}
 \int|\nabla u|^{\tilde{\tau}(x)}|u-v| &\leq \epsilon_3 \int|\nabla u|^{p(x)} + \int \epsilon_3^{-\frac{\px}{p(x) - \tilde{\tau}(x)}} |u-v|^{\frac{p(x)}{p(x) - \tilde{\tau}(x)}} \nonumber \\
&\leq \epsilon_3 \int|\nabla u|^{p(x)} + \int \epsilon_3^{-\frac{\px}{p(x) - \tilde{\tau}(x)}} |u-v|^{\tilde{r}(x)}. \end{align}
Note that $p - \tilde{\tau} > 0$ in $\Om$ for our choice of $\tilde{\tau}$.

Let us estimate the integrals (I) through (III) next. For integral (I), we take $\alpha = 1 - \frac{\tilde{r}(x)}{r(x)}$ and $\beta$ so that $\alpha + \beta = 1$. Then $\beta \frac{r(x)}{\tilde{r}(x)} = 1$, and by using the Young inequality for $\epsilon_1\in(0,1)$ we get
\begin{align}\label{eval-IV}
\nonumber \int b(x)|u|^{\tilde{r}(x)} &= \int b(x)^\alpha b(x)^\beta |u|^{\tilde{r}(x)} \\
&\leq \int \epsilon_1^{-\frac{r(x)}{r(x)-\tilde{r}(x)}} b(x)^{\alpha \frac{r(x)}{r(x) - \tilde{r}(x)}} + \int \epsilon_1 b(x)^{\beta \frac{r(x)}{\tilde{r}(x)}} |u|^{r(x)} = \int \epsilon_1^{-\frac{r(x)}{r(x)-\tilde{r}(x)}} b(x) + \int \epsilon_1 b(x) |u|^{r(x)}.
\end{align}

\noindent For (II), we note that
\begin{align}\label{eval-I}
\int b(x)|u|^{\tilde{\sigma}(x)}|\nabla v| &\leq \int \epsilon_2^{p'(x)} b(x)^{\frac{p(x)}{p(x)-1}} |u|^{r(x)} + \int \epsilon_2^{-p(x)} |\nabla v|^{p(x)}
\end{align}
by our choice of $\tilde{\sigma}$, see \eqref{AB-exp-cond}.

\noindent Finally, we estimate integral (IV)
\begin{align}\label{eval-III}
\int b(x)|u|^{\tilde{\delta}(x)}|u-v| &= \int b(x)|u|^{\tilde{r}(x) - 1}|u-v| \leq \int b^{\frac{\tilde{r}(x)}{\tilde{r}(x)-1}} |u|^{\tilde{r}(x)} + \int |u-v|^{\tilde{r}(x)},\nonumber \\
&\leq \int \epsilon_4^{-\frac{r(x)}{r(x)-\tilde{r}(x)}} + \int \epsilon_4 b(x)^{\frac{\tilde{r}(x)}{\tilde{r}(x)-1} \frac{r(x)}{\tilde{r}(x)}} |u|^{r(x)}+\int |u-v|^{\tilde{r}(x)}.
\end{align}


Upon collecting estimates \eqref{evals-1-through-4}--\eqref{eval-III}, we arrive at the following inequality
\begin{align}
& \mu \int |\nabla u|^{p(x)} \leq \int \epsilon_1^{-\frac{r(x)}{r(x)-\tilde{r}(x)}} b(x) + \int \epsilon_1 b(x) |u|^{r(x)}+ \int f(x) \nonumber \\
&+ \epsilon_0 \mu\int |\nabla u|^{p(x)} +  \mu\int \epsilon_0^{1-p(x)} |\nabla v|^{p(x)}
+ \int \epsilon_2^{p'(x)} b(x)^{\frac{p(x)}{p(x)-1}} |u|^{r(x)} + \int \epsilon_2^{-p(x)} |\nabla v|^{p(x)} \nonumber \\
&+ \int g(x)^{p'(x)} + \int |\nabla v|^{p(x)} \label{AB-collecting1}\\
&+ \epsilon_3 \mu \int|\nabla u|^{p(x)} + \mu \int \epsilon_3^{-\frac{\px}{p(x) - \tilde{\tau}(x)}} |u-v|^{\tilde{r}(x)} + \int \epsilon_4^{-\frac{r(x)}{r(x)-\tilde{r}(x)}} + \int \epsilon_4 b(x)^{\frac{r(x)}{\tilde{r}(x)-1}} |u|^{r(x)} + \int |u-v|^{\tilde{r}(x)} \nonumber \\
&+ \int |u-v|^{r(x)} + \int h(x)^{r'(x)}. \nonumber
\end{align}
We rearrange terms in estimate \eqref{AB-collecting1} to obtain
\begin{align}
\mu \int |\nabla u|^{p(x)} &\leq  \mu (\epsilon_0 + \epsilon_3) \int |\nabla u|^{p(x)}
+ \int \left( \epsilon_0^{1-p(x)}\mu + \epsilon_2^{-p(x)} + 1\right) |\nabla v|^{p(x)} \label{epsilon-corral}\\
&+ \int \left( \mu \epsilon_3^{-\frac{\px}{p(x) - \tilde{\tau}(x)}} + 1 \right) |u-v|^{\tilde{r}(x)}
+ \int |u-v|^{r(x)} \label{u-v-corral} \\
%
\label{tilde-b-ur-corral}
&+ \int \left(\epsilon_1 b(x) + \epsilon_2^{p'(x)} b(x)^{\frac{p(x)}{p(x)-1}} + \epsilon_4 b(x)^{\frac{r(x)}{\tilde{r}(x)-1}} \right)
|u|^{r(x)} \\
&+ \int \epsilon_1^{-\frac{r(x)}{r(x)-\tilde{r}(x)}} b(x)
+ f(x) + g(x)^{p'(x)} + \epsilon_4^{-\frac{r(x)}{r(x)-\tilde{r}(x)}}+ h(x)^{r'(x)}. \nonumber
\end{align}
We denote the integrand on the last line by $\tilde{g}$. Upon choosing $\epsilon_0$ and $\epsilon_3$ so that $\mu(\epsilon_0 + \epsilon_3)<1$, we include the $|\nabla u|$-term in the left-hand side of \eqref{epsilon-corral}. For line \eqref{tilde-b-ur-corral}, we define
\[
\mathbb{B}(x):= \max\{\epsilon_2^{p'_-},\epsilon_1,\epsilon_4\} \left( b(x) + b(x)^{\frac{p(x)}{p(x)-1}} + b(x)^{\frac{r(x)}{\tilde{r}(x)-1}} \right).
\]
Then we estimate expression in \eqref{tilde-b-ur-corral} by $\int \mathbb{B}(x) |u|^{r(x)}$, and proceed as follows. First,
\begin{align}
\int \mathbb{B}(x) |u|^{r(x)} &\leq  2^{r_+}\int \mathbb{B}(x) \left( |u-v|^{r(x)} + |v|^{r(x)} \right)
\leq c(r^+)\int \mathbb{B}(x) \left( |u-v|^{r_+} + 1 + |v|^{r(x)} \right) \nonumber\\
 &= c(r^+)\left(\int \mathbb{B}(x) |u-v|^{r_+} + \int \mathbb{B}(x) |v|^{r(x)} + \int \mathbb{B}(x) \right).
 \label{destroy-b-ur}
\end{align}
We include the last term in \eqref{destroy-b-ur} in $\tilde{g}$. For the first term in \eqref{destroy-b-ur} we again use the Young inequality:
\begin{align}\label{destroy-u-v-r-plus}
 c(r_+) \int \mathbb{B}(x) |u-v|^{r_+} \leq \epsilon_5c(r_+) \int |u-v|^{p^*_-} + \epsilon_5^{- \frac{r_+}{p^*_- - r_+}} c(r_+) \int \mathbb{B}(x)^{\frac{p^*_-}{p^*_- - r_+}},
\end{align}
since $r_+ \leq p^*_-$ in sufficiently small balls, see discussion in the beginning of the proof.
As for the first right-hand side integral in \eqref{destroy-u-v-r-plus}, by the Sobolev embedding theorem, we have
\begin{align}
\int |u-v|^{p^*_-} &\leq C(C_{Sob}, p^*_-) \left( \int |\nabla u|^{p_-} + |\nabla v|^{p_-} \right)^{\frac{p^*_-}{p_-}} \label{AB-Sobemb}\\
&\leq C(C_{Sob}, p^*_-, p^-) \left( \int |\nabla u|^{p(x)} + |\nabla v|^{p(x)} + 2 \right)^{\frac{p^*_-}{p_-}} \nonumber
\end{align}
\begin{align}
&= C(C_{Sob}, p^*_-, p^-) \left( \int |\nabla u|^{p(x)} + |\nabla v|^{p(x)} + 2 \right)^{\frac{p^*_-}{p_-}- 1} \left( \int |\nabla u|^{p(x)} + |\nabla v|^{p(x)} + 2 \right) \nonumber \\
&= C \int (|\nabla u|^{p(x)} + \mathbb{B}(x)|u|^{r(x)} + 2).\nonumber
\end{align}
Here $C=(C_{Sob}, p^*_-, p^-, \|u\|_{\Wpxloc(\Om')})$. In the last step we also use the observation that
\[
\int |\nabla v|^{p(x)} < \int |\nabla u|^{p(x)} + \mathbb{B}(x)|u|^{r(x)},
\]
since otherwise $u$ is a local minimum of $J$ and so, in particular, a quasiminimum. For details see the similar discussion in the proof of Lemma~\ref{new-lem-2.4}, inequality \eqref{Lem2.4:ineq-trick}. We use the above estimate in \eqref{destroy-u-v-r-plus} and upon choosing $0<\epsilon_5<1$ so that $\epsilon_5 c(r^+)C<1$, we include the $|\nabla u|$-term in the left-hand side of \eqref{epsilon-corral}.

The similar argument applies to expressions in \eqref{u-v-corral}, since $\tilde{r} < r$, and hence
\[
|u-v|^{r(x)} \leq |u-v|^{r_+} + 1 \quad \textrm{ and } \quad |u-v|^{\tilde{r}(x)} \leq |u-v|^{r_+} + 1.
\]
From this we can proceed as in \eqref{destroy-u-v-r-plus} and \eqref{AB-Sobemb}.

We collect together the above estimates and apply them in \eqref{epsilon-corral} to arrive at the following inequality
\begin{align}
\mu \int |\nabla u|^{p(x)} \leq C\left( \int |\nabla v|^{p(x)}+ \epsilon \int \mathbb{B}(x) |u|^{r(x)} + \int \mathbb{B}(x)|v|^{r(x)} + \int (\tilde{g} + \mathbb{B}(x)+\mathbb{B}(x)^{\frac{p^*_-}{p^*_- - r_+}})\right).\label{AB-epsilon-collect2}
\end{align}
Here $\epsilon \in (0, 1)$ while $C$ depends on parameters of constants in the preceding estimates.

Denote $z(x):=\tilde{g}(x) + \mathbb{B}(x)+\mathbb{B}^{\frac{p^*_-}{p^*_- - r_+}}(x)$.

We add $\int \mathbb{B}(x) |u|^{r(x)} + \int z(x)$ to the both sides of \eqref{AB-epsilon-collect2}, to get that
\begin{align}\label{getting-there}
\mu \int |\nabla u|^{p(x)} + (1-C\epsilon) \int \mathbb{B}(x) |u|^{r(x)} + \int z(x) &\leq C \Big(\int |\nabla v|^{p(x)}    +\int \mathbb{B}(x)|v|^{r(x)}+ \int z(x) \\
&+ \int \mathbb{B}(x)|u|^{r(x)} \Big). \nonumber
\end{align}
Arguing as in \eqref{destroy-b-ur} through \eqref{AB-Sobemb} we have
\begin{align*}
\int \mathbb{B}(x) |u|^{r(x)}
\leq
c(r^+)\left( \int \mathbb{B}(x) + \int \mathbb{B}(x)|v|^{r(x)}\right) + \int \epsilon_6 |u-v|^{p^*_-} + \int \mathbb{B}(x)^{\frac{p^*_-}{p^*_- - r_+}} \epsilon_6^{-\frac{r_+}{p^*_- - r_+}},
\end{align*}
and
\[
\int \epsilon_6 |u-v|^{p^*_-} \leq \epsilon_6 C\left( \int |\nabla u|^{p(x)} + \int |\nabla v|^{p(x)} + \int 2 \right).
\]
Thus, choosing $\epsilon_6$ small enough and the appropriate value of $\epsilon$, \eqref{getting-there} becomes
\begin{align*}
 \int \mu |\nabla u|^{p(x)} +  \mathbb{B}(x)|u|^{r(x)} + 2 z(x) & \leq
 C \int \left( |\nabla v|^{p(x)} + \mathbb{B}(x)|v|^{r(x)}+ z(x)\right)\\
 &+C\int\left(\mathbb{B}(x)+\mathbb{B}(x)^{\frac{p^*_-}{p^*_- - r_+}}+2\right).
\end{align*}
We apply the definition of $k$ and increase the value of $C$ if necessary and obtain that
\[
 J_{\Om'}(u)\leq C J_{\Om'}(v).
\]
This completes the proof of Theorem~\ref{AB-quasim}.
\end{proof}

\section{Appendix}

The purpose of this section is to prove Lemmas~\ref{new-lem-2.4} and \ref{new-ren-2.5}, see Preliminaries for the formulation of the lemmas, also Remark \ref{Remark-K-and-norm}.

\begin{proof}[Proof of Lemma~\ref{new-lem-2.4}]
 We define an auxiliary energy functional
 \begin{equation*}
 \widetilde \calFone(u):=\calFone_{, \Om'}(u)-\int_{\Om'} h(x)\,dx=\int_{\Om'}(\mu|\nabla u|^{\px}+b(x)|u|^{\rx}+g(x))\,dx.
 \end{equation*}
 Let $\Om''\Subset \Om'$ be a subdomain of $\Om'$. In the proof we will need estimates for the integrand terms of $\calFone$. We start with the following inequality for a test function $\phi\in \Wpxzero(\Om'')$ and some $\epsilon\in (0,1)$:
  \begin{align}
   b(x)|u|^{\rx}&=b(x)|u+\phi-\phi|^{\rx} \leq 2^{r^+}\left(\epsilon^{\frac{\rx}{\pstarx}}|\phi|^{\rx} b(x)
   \epsilon^{-\frac{\rx}{\pstarx}}+b(x)|u+\phi|^{\rx} \right) \nonumber \\
   &\leq 2^{r^+}\left(\epsilon |\phi|^{\pstarx}+ \epsilon^{-\frac{\pstarx}{\pstarx-\rx}}b(x)^{\frac{\pstarx}{\pstarx-\rx}}+b(x)|u+\phi|^{\rx} \right).\label{Lem2.4:est1}
  \end{align}
 In the last step above we used the Young inequality together with the assumption $r \leq p^{*}$ in $\Om$.

 We can suppose that
 \begin{equation}\label{Lem2.4:ineq-trick}
 \int \limits_{\supp \phi} \mu |\nabla (u+\phi)|^{\px}< \int \limits_{\supp \phi} \mu |\nabla u|^{\px}+b(x) |u|^{\rx},
 \end{equation}
 since otherwise it holds that
 \begin{align*}
  \widetilde \calFone(u)&=\int \limits_{\supp \phi} \mu|\nabla u|^{\px}+ b(x) |u|^{\rx}+g(x) \leq \int \limits_{\supp \phi} \mu |\nabla (u+\phi)|^{\px}+g(x) \\
  &\leq \int \limits_{\supp \phi} \mu |\nabla (u+\phi)|^{\px}+b(x)|u+\phi|^{\rx}+g(x)=\widetilde \calFone(u+\phi).
 \end{align*}
Hence $u$ is a minimizer of $\widetilde \calFone$ and so also a minimizer of $\calFone$. In particular, $u$ is a quasiminimizer of $\calFone$.

We will now estimate the term containing $|\phi|^{\pstar}$ in \eqref{Lem2.4:est1}.
By the Sobolev embedding theorem (cf. Diening--Harjulehto--H{\"a}st{\"o}--R{\r u}{\v z}i{\v c}ka~\cite[Theorem 8.3.1]{DHHR}) and the unit ball property defined in \eqref{unit-ball-prop} we have that
\begin{align*}
 &\int \limits_{\supp \phi} |\phi|^{\pstar} \\
 &\,\,\,\,\,\,\,\,\leq\max\{\|\phi\|^{\pstarm}_{L^{p^*(\cdot)}}, \|\phi\|^{\pstarp}_{L^{p^*(\cdot)}}\}\leq c\max\{\|\nabla \phi\|^{\pstarm}_{L^{\p}}, \|\nabla \phi\|^{\pstarp}_{L^{\p}}\} \\
 &\,\,\,\,\,\,\,\,\leq c\max\Big\{\max\{(\!\!\int \limits_{\supp \phi} |\nabla \phi|^{\p})^{\frac{\pstarm}{p^{-}}}, (\!\!\int \limits_{\supp \phi} |\nabla \phi|^{\p})^{\frac{\pstarm}{p^{+}}} \},
 \max\{(\!\!\int \limits_{\supp \phi} |\nabla \phi|^{\p})^{\frac{\pstarp}{p^{-}}}, (\!\!\int \limits_{\supp \phi} |\nabla \phi|^{\p})^{\frac{\pstarp}{p^{+}}} \}\Big\}.
\end{align*}
 By considering two cases: $\int_{\supp \phi} |\nabla \phi|^{\p}\leq (>)1$ we conclude that
 \begin{align}
  \int \limits_{\supp \phi} |\phi|^{\pstarx} & \leq c \max\Big\{\Big(\,\int \limits_{\supp \phi} |\nabla \phi|^{\p}\Big)^{\frac{p^{+}}{p^{-}}\frac{n}{n-p^{+}}}, \Big(\,\int \limits_{\supp \phi} |\nabla \phi|^{\p}\Big)^{\frac{n}{n-p^{-}}} \Big\} \nonumber \\
  &\leq c \bigg(\int \limits_{\supp \phi} |\nabla \phi|^{\p} \bigg) \max\Big\{\Big(\,\int \limits_{\supp \phi} |\nabla \phi|^{\p}\Big)^{\frac{p^{+}}{p^{-}}\frac{n}{n-p^{+}}-1}, \Big(\,\int \limits_{\supp \phi} |\nabla \phi|^{\p}\Big)^{\frac{n}{n-p^{-}}-1} \Big\}. \label{Lem2.4:est2}
 \end{align}
 Note that all powers of $|\nabla \phi|$-modulars on the right hand-side are strictly positive.
 We again appeal to observation \eqref{Lem2.4:ineq-trick} and are, therefore, able to provide the following estimate
 \begin{align}
  \int \limits_{\supp \phi} |\nabla \phi|^{\p} &\leq 2^{p^+} \int \limits_{\supp \phi} (|\nabla u|^{\p} + |\nabla (u+\phi)|^{\p}) \nonumber \\
  & \leq \frac{2^{p^+}}{\mu} \int \limits_{\supp \phi} (2\mu|\nabla u|^{\p}+b(x)|u|^{\rdot}) \nonumber \\
  & \leq c(\mu, p^+, r^+, \|u\|_{\Wpxloc(\Om)}, \|b\|_{L^{\sigma(\cdot)}}, |\Om|). \label{Lem2.4:est3}
 \end{align}
  In the last step we notice that since $p\leq r$, then $\frac{r\sigma}{\sigma-1}>p$ and thus by the variable exponent H\"older inequality we estimate $b|u|^{\rdot}$ in terms of $|\Om|$, $\|u\|_{L^{\p}_{loc}(\Om)}$ and $\|b\|_{L^{\sigmadot(\Om)}}$ for $\sigma > \frac{\pstar}{\pstar-r}$, see Remark 2 in Toivanen~\cite{Toivanen-2}.
 By combining \eqref{Lem2.4:est2} and \eqref{Lem2.4:est3} we obtain
 \begin{equation}\label{Lem2.4:est-phi}
 \int \limits_{\supp \phi} |\phi|^{\pstar} \leq C \int \limits_{\supp \phi} |\nabla \phi|^{\p},
 \end{equation}
 Constant $C$ depends on the variable exponent Sobolev embedding constant from \eqref{Lem2.4:est2} and parameters in $c$ from \eqref{Lem2.4:est3}.

 We are in a position to complete the proof of the lemma. By the quasiminimizing property of $u$, growth assumption \ref{F-growth}, the definition of $\calFone$ and inequalities \eqref{Lem2.4:est1}, \eqref{Lem2.4:est3} and \eqref{Lem2.4:est-phi}, we have that
 \begin{align}
 &\int \limits_{\supp \phi} \mu|\nabla u|^{\px}+ b(x) |u|^{\rx}+g(x) \leq K \widetilde \calFone(u+\phi) + 2 \int \limits_{\supp \phi} (b(x)|u|^{\rx}+g(x)) \nonumber \\
  & \leq K \widetilde \calFone(u+\phi) + \int \limits_{\supp \phi} 2g(x)+ 2^{r^+}\Big(\epsilon \int \limits_{\supp \phi} |\phi|^{\pstar}+ \int \limits_{\supp \phi} \epsilon^{-\frac{\pstar}{\pstar-r}}b(x)^{\frac{\pstar}{\pstar-r}}+\int \limits_{\supp \phi} b(x)|u+\phi|^{\rx} \Big) \nonumber \\
  & \leq K \widetilde \calFone(u+\phi) + \int \limits_{\supp \phi} 2g(x)+ 2^{r^++p^+}\epsilon C \Big(\int \limits_{\supp \phi} |\nabla u|^{\p}+ \int \limits_{\supp \phi} |\nabla (u+\phi)|^{\p} \Big) \nonumber \\
  &+ 2^{r^+}\int \limits_{\supp \phi} \epsilon^{-\frac{\pstar}{\pstar-\rdot}}b(x)^{\frac{\pstar}{\pstar-r}}+2^{r^+} \int \limits_{\supp \phi} b(x)|u+\phi|^{\rx}.\label{Lem2.4:est4}
 \end{align}
 Next, we choose $\epsilon$ so that $2^{r^++p^+}\epsilon C =\frac{\mu}{2}$. Furthermore, since $\sigma > \frac{\pstar}{\pstar-r}$, we may apply the variable exponent H\"older inequality and then the unit ball property to obtain that
 \begin{equation*}
  2^{r^+}\int \limits_{\supp \phi} \epsilon^{-\frac{\pstar}{\pstar-\rdot}}b(x)^{\frac{\pstar}{\pstar-\rdot}}\leq c,
 \end{equation*}
 where $c$ depends on $|\Om|$, $\|b\|_{L^{\sigmadot(\Om)}}$ and the choice of $\epsilon$. Then, we may include the $|\nabla u|^{\p}$-integral on the right-hand side of \eqref{Lem2.4:est4} in the left-hand side of \eqref{Lem2.4:est4}. In a consequence we arrive at the following:
 \begin{align}
  &\int \limits_{\supp \phi} \frac{\mu}{2}|\nabla u|^{\px}+ b(x) |u|^{\rx}+g(x) \leq K \widetilde \calFone(u+\phi) \nonumber \\
  &+\max\{2^{r^+}, \frac{\mu}{2}\}\Big( \int \limits_{\supp \phi} |\nabla (u+\phi)|^{\px}+b(x)|u+\phi|^{\rx}+g(x) \Big)+ \frac{2^{r^+}}{\epsilon^{(\frac{\pstar}{\pstar-\rdot})_{-}}}\int \limits_{\supp \phi}b(x)^{\frac{\pstarx}{\pstarx-\rx}}. \label{Lem2.4:est5}
 \end{align}
 Finally, from \eqref{Lem2.4:est5} we infer that
 \begin{align}
  & \frac12 \int \limits_{\supp \phi} \mu|\nabla u|^{\px}+ b(x) |u|^{\rx}+g(x)+h(x) \leq
  \frac12 \int \limits_{\supp \phi} \mu|\nabla u|^{\px}+ 2b(x) |u|^{\rx}+2g(x)+h(x) \nonumber \\
  &\leq K \widetilde \calFone(u+\phi) + c\Big( \calFone(u+\phi)+ \int \limits_{\supp \phi}b(x)^{\frac{\pstarx}{\pstarx-\rx}}\Big)+ \int \limits_{\supp \phi} \frac{h(x)}{2} \nonumber \\
  & \leq (K+c) \calFone(u+\phi) + \int\limits_{\Om''}\left(c b(x)^{\frac{\pstarx}{\pstarx-\rx}}+h(x)(\frac12-K-c)\right), \label{Lem2.4:est6}
 \end{align}
 where $c$ depends on $p^+, p^-, n, r^+, \mu$ and $\|u\|_{\Wpxloc}, \|b\|_{L^{\sigma(\cdot)}},\|g\|_{L^{t}}$. Since we defined $h:=\frac{c}{K+c-\frac12}b^{\frac{\pstar}{\pstar-\rdot}}$ we obtain that the last integral on the right-hand side of \eqref{Lem2.4:est6} is zero and so \eqref{Lem2.4:est6} reads
 \begin{equation*}
  \calFone(u)\leq 2(K+c) \calFone(u+\phi).
 \end{equation*}
 Thus, the proof of Lemma~\ref{new-lem-2.4} is completed.
\end{proof}

\begin{proof}[Proof of Lemma~\ref{new-ren-2.5}]
 We first show that under the assumptions of the lemma, $-u$ is a bounded minimizer of $\mathcal{F}_1$. Indeed,
 observe that $\calFone(-u)=\calFone(u)$, while for any $\phi\in \Wpxzero(\Om)$ it holds that
 \begin{align*}
  \calFone(-u+\phi)&=\int_{\Om}(\mu|-\nabla (u-\phi)|^{\px}+b(x)|-(u-\phi)|^{\rx}+g(x))dx=\calFone(u-\phi).
 \end{align*}
 Hence, the quasiminimizing property for $\calFone(-u)$ follows immediately from the corresponding property for $\calF$ applied with $-\phi$.

 Next, we show that under the assumptions of the lemma $u-k$ is the $K_1$-quasiminimizer of the energy $\calFtwo_{, \Om'}$. Let $\Om''\Subset \Om'$ and $\phi\in \Wpxzero(\Om'')$ be a test function. Then
 \begin{align*}
  &\calFtwo(u-k)= \int_{\Om}(\mu|\nabla (u-k)|^{\px}+b(x)|u-k|^{\rx}+g(x)+\tilde h(x))\\
&\leq \!\int \limits_{\supp \phi}( \mu |\nabla u|^{\px}+g(x))+\int \limits_{\supp \phi}\!\tilde h(x)+\int \limits_{\supp \phi} (2^{r(x)} b(x)|u|^{r(x)}+ 2^{r(x)} b(x)|k|^{r(x)}) \\
  & \leq K \int \limits_{\supp \phi}( \mu |\nabla (u+\phi)|^{\px}+b(x)|u+\phi-k+k|^{r(x)}+g(x)+h(x)) + \int \limits_{\supp \phi}\!\tilde h(x) \\
  &+ 2^{r+}K\int \limits_{\supp \phi} (\mu|\nabla (u+\phi)|^{\px}+b(x)|u+\phi-k+k|^{\rx}+g(x)+h(x))+\int \limits_{\supp \phi} 2^{r(x)} b(x)|k|^{r(x)} \\
  &\leq 2^{r^+}(2^{r+}+1)K\int \limits_{\supp \phi} (\mu|\nabla (u-k+\phi)|^{\px}+b(x)|u+\phi-k|^{\rx}+g(x)+h(x))\\
  &+ 2^{r^+}[(2^{r^+}+1)K+1]\int \limits_{\supp \phi} b(x)|k|^{r(x)}-\left(2^{r^+}(2^{r+}+1)K-1\right)\int \limits_{\supp \phi}\!\tilde h(x).
 \end{align*}
 We use the fact that $|k|^{\rdot}\leq (1+\sup_{\Om'}|u|)^{r^+}$ together with the definition of function $\tilde h$ to conclude the above estimations:
 \begin{align*}
& \calFtwo(u-k) \leq K_1 \calFtwo (u-k+\phi) \\
  & +2^{r^+}[(2^{r^+}+1)K+1](1+\sup_{\Om'}|u|)^{r^+}\int \limits_{\supp \phi} \left(b(x)-\frac{2^{r+}(2^{r+}+1)K-1}
      {2^{r+}[(2^{r^+}+1)K+1](1+\sup_{\Om'}|u|)^{r^+}} \tilde h(x)\right) \\
      &=K_1 \calFtwo (u-k+\phi).
 \end{align*}
 Thus, $u-k$ is a $K_1$-quasiminimizer of $\calFtwo$ and the proof of Lemma~\ref{new-ren-2.5} is completed.
\end{proof}

\end{document}